\title[On the slope of fibrations]{On the slope of relatively minimal fibrations on rational complex surfaces}
\author{Claudia R. Alc\'antara, Abel Castorena, Alexis G. Zamora}
\date{}
\thanks{The first author was partially supported by CONACyT Grant 058486. The second author was partially supported by CONACyT Grant 48668-F and
PAPIIT Grant
IN100909-2. The third author was partially supported by CONACyT Grants 25811.}
\subjclass[2000]{Primary 14D06, 14J26}
\keywords{rational surface, fibration.}
\begin{document}

\newtheorem{teo}{Theorem}[section]
\newtheorem{lem}[teo]{Lemma}
\newtheorem{defin}[teo]{Definition}
\newtheorem{prop}[teo]{Proposition}
\newtheorem{cor}[teo]{Corollary}
\newtheorem{remark}[teo]{Remark}

\newcommand{\Hc}{\mathscr{H}}
\newcommand{\Oc}{\mathcal{O}}
\newcommand{\Tc}{\mathscr{T}}
\newcommand{\CP}{\mathbb{CP}^2}
\newcommand{\C}{\mathbb{C}}
\newcommand{\R}{\mathbb{R}}
\newcommand{\Q}{\mathbb{Q}}
\newcommand{\Z}{\mathbb{Z}}
\newcommand{\N}{\mathbb{N}}
\newcommand{\F}{\mathscr{F}}

\begin{abstract}
Given a relatively minimal fibration $f: S \to \Bbb P^1$, defined
on a rational surface $S$, with a general fiber $C$ of genus $g$,
we investigate under what conditions the inequality $6(g-1)\le
K_f^2$ occurs, where $K_f$ is the canonical relative sheaf of $f$.
We give sufficient conditions for having such inequality,
depending on the genus and gonality of $C$ and the number of
certain exceptional curves on $S$. We illustrate how these results
can be used for constructing fibrations with the desired property.
For fibrations of genus $11\le g\le 49$ we prove the inequality:
$$ 6(g-1) +4 -4\sqrt g \le K_f^2.$$

\end{abstract}
\maketitle

\section{Introduction}

Let $S$ be a projective complex nonsingular surface. A fibration
$f:S \to X$ on $S$ is a morphism onto a projective curve $X$ with
connected fibers. Throughout this paper $X$ will be equal to $\Bbb
P^1$. We use the usual identification of divisors and their
associated sheaves and write, for example, $H^i(L)$ instead of
$H^i(S,\mathcal{O}_S(L))$.
\\

Given a fibration $f$, the relative canonical sheaf of $f$ is
defined as $K_f= K_S\otimes f^* K_X^{-1}$, if $X$ is rational then
$K_f=K_S(2C)$, with $C$ a general fiber (which is assumed to have
genus $g \ge 2$). This sheaf is known to be big and nef for
non-isotrivial and relatively minimal fibrations, a result proved
in the foundational papers by Arakelov and Parshin (see
\cite{arakelov} and \cite{parshin}). In the case when $S$ is
rational and $g>0$, $K_f$ is nef even if we drop the hypothesis on
isotriviality. This is due to the fact that in this case $K_f$ is
an effective divisor, thus $K_f\cdot E<0$ would imply that $E$ is
a vertical $(-1)$-curve which gives a contradiction with the
relatively minimal hypothesis (see proof of \ref{primerlema}).
\\

The basic numerical invariant associated to a non-isotrivial
fibration is the so called {\it slope} of $f$ defined as:

$$ \lambda_f= \frac{K_f^2}{\deg(f_*K_f)}.$$

In the case of our interest, when $S$ is a rational surface,
$deg(f_*K_f)=g$ (see Lemma  \ref{lema2.2}) for any fibration $f$
and $\lambda_f=K_f^2/g$.
\\

The study of  the restrictions that the slope of a fibration must
satisfy in relation to the genus $g$ is a central issue in the
theory. As noted before, in the case of rational surfaces the
study of $\lambda_f$ is equivalent to that of $K_f^2$.
\\

This paper is devoted to the relation between $K_f^2$ and $g$ in
the case of rational surfaces. Beside its importance for the study
of the slope, this relation is relevant for another problem, the
bounding of the minimal number $\sigma$ of singular fibers that a
semi-stable non isotrivial fibration must have.
\\

The strict canonical inequality (see \cite{tan} and \cite{vojta}) states
that:

$$ K_f^2 < (\sigma -2)(2g-2),$$
for any semi-stable, non isotrivial fibration $f: S \to \Bbb
P^1$ of genus $g\ge 2$.
\\

In this way inequalities of the sort of $n(g-1)\le K_f^2$, for
some integer $n$, lead to lower bounds for the number $\sigma $
(see \cite{tan} and \cite{tan-tu-zamora}). For instance, the
inequality

$$ 6(g-1)\le K_f^2,$$

\noindent implies, for semistable and non isotrivial fibrations,
that $\sigma \ge 6$.
\\

For the case of surfaces of non-negative Kodaira dimension it is
known that in fact the inequality $ 6(g-1)\le K_f^2$ (and in
consequence $\sigma \ge 6$) holds for any semi-stable and
non-isotrivial fibration (see \cite{konno} and
\cite{tan-tu-zamora}).
\\

However, it is a hard problem to determine for which fibrations on
a rational or ruled surface the inequality $6(g-1)\le K_f^2$ is
valid. Simple examples of  rational surface admitting a fibration
for which $K_f^2< 6(g-1)$ are shown after the statement of Theorem
3.7. In this paper we obtain several general conditions to
guarantee the validity of this inequality for rational surfaces.
\\

Our method is based on the study of the linear systems $|C+nK_S|$
with $n=2,3$. If any of these linear systems is non-empty, then it
is possible to compute its Zariski-Fujita decomposition $P+N$,
with $P$ a nef divisor. The resulting inequality $P^2\ge 0$ gives
inequalities involving $K_f^2$, $g$ and some auto-intersection
numbers of exceptional divisors on $S$. If moreover, $P$ is big
then $\chi(P+K_S) \ge 0$ also gives useful inequalities. Extra
hypotheses on the genus $g$ and the gonality of $C$ allows us to
guarantee the non-emptiness of these linear systems. These
hypotheses are imposed in order to be able to apply Reider's
method to the study of the linear systems. These results are
summarized in Theorems \ref{teorema1} and \ref{teorema2}.
\\

First of all we can compute the negative part $N_1$ of the
Zariski-Fujita decomposition of $C+2K_S$ and the auto-intersection
$N_1^2=-l$. $N_1$ is given by the expression:

$$N_1 = \sum_{i=1}^{s} \big[(l(\Gamma_i)+1)\Gamma_i +
\sum_{j=1}^{\l(\Gamma_i)}(l(\Gamma_i)-j+1)E_{ij} \big],$$

\noindent where $\{\Gamma_i\}$ is the set of $(-1)-$sections of
$f$ and $\sum_{j=1}^{l(\Gamma_i)} E_{ij}$ is a maximal chain
(possibly empty) of vertical $(-2)$ curves satisfying:

\begin{equation*}
\Gamma_i\cdot E_{i1}=1, \quad E_{ik}\cdot E_{im}=\left\{ 
        \begin{tabular}{cc}
             1 \quad &\textrm{ if  $\vert k-m\vert =1$}, \\
             0   &\textrm{otherwise.} \\
        \end{tabular}
\right.
\end{equation*}
\\

Thus, in this notation $l(\Gamma_i)$ is the length of the chain of
$(-2)-$curves attached to $\Gamma_i$, and $s$ the number of $-1$
sections of $f$. The resulting auto-intersection number $l=-N_1^2$
is $l=\sum_i (l(\Gamma_i)+1)$. We call  such  a chain of
$(-2)$-curves a $(-2)$-divisor.
\\

\noindent \textbf{Theorem 3.3} \emph{ Let $f: S \to \Bbb P^1$ be a
non-isotrivial relatively minimal fibration on a rational
surface $S$, with general fiber $C$ of genus $g$. Then the
following statements hold:}
\\

\noindent  \emph{i) If $C+2K_S$ is effective, then $C+2K_S-N_1$ is
nef, and}

\begin{equation*}
0 \leq 4K_f^2-24(g-1)+l.
\end{equation*}

\noindent  \emph{ ii) If $g \geq 7$ and the gonality of $C$ is at
least $4$, then  $C+2K_S$ is effective.}
\\

\noindent  \emph{  iii) If $g \geq 11$ and the gonality of $C$ is
at least 5, then $C+2K_S-N_1$ is also big and}

\begin{equation*}
0 \leq 3K_f^2 -19 (g-1) +l +1.
\end{equation*}

\noindent In particular if $l+1\leq g-1$ then $6(g-1)\leq K_f^2$.
\vspace{.2cm}
\\

We remark that if we contract the $(-1)-$sections we obtain a new
rational surface and an associated pencil having its base locus
just on the image of the $\Gamma_i$. After this contraction the
images of $E_{i1}$ are $(-1)$ curves and can be contracted again
to a non-singular rational surface if we continue this procedure
we can finally contract the divisor $N_1$ and obtain a new
rational nonsingular surface and a pencil on it with its base
locus in the image of the connected components of $N_1$.
Conversely, resolving the base locus of the resulting pencil we
obtain the original fibration on $S$.
\\

In order to explain the content of Theorem \ref{teorema1}, we
introduce a basic example. Start with a pencil generated by two
irreducible, nonsingular plane curves of degree $d$ intersecting
each other transversally and consider the fibration $f: S \to \Bbb
P^1$ obtained by blowing up the $d^2$ base points. The invariants
associated are:

$$g-1=\frac{d(d-3)}{2}, \quad K_f^2=K_S(2C)^2=3d^2-12d+9,$$
and finally, $\Bbb P^2$ being a minimal surface, $l$ coincides in
this case with the number of $(-1)$ sections of $f$, therefore
$l=d^2$.
\\

In this way,

$$ K_f^2-6(g-1)= 9-3d,$$ which is negative for $d>3$. On the other
hand, $C+2K_S$ is effective for $d\ge 6$ (see the computations in Section 4, Example (1)).
\\

Now, if we add the prescribed term in Theorem \ref{teorema1} i),
namely $l/4$ (we are dividing the inequality by $4$), then we
obtain:

$$\frac{d^2}{4}-3d +9,$$ that is in fact positive for all values of $d$.
Note also that in these examples the gonality of the general fiber
is $d-1$.
\\



Returning to the general situation, a similar analysis can be made
for $C+3K_S$. The negative part of $C+3K_S-N_1$ in the
Zariski-Fujita decomposition is of the form $N_1+N_1'+N_2$.
Explicitly the divisors $N_1'$ and $N_2$, are given by:

$$ N_1'=\sum_{j=1}^{\l'(\Gamma_i)}(l'(\Gamma_i)-j+1)E'_{ij},$$

\noindent where $E_i'=\sum_i E'_{ij}$ are maximal $(-2)$ divisors
such that $E'_{i}\cdot C=1$ and $E'_{i}\cdot \Gamma_i=1$, and

$$N_2=\sum_{i=1}^{t} [(m(\Delta_i)+1)\Delta_i +
\sum_{j=1}^{m(\Delta_i)}(m(\Delta_i)-j+1)F_{ij}],$$

\noindent with $\{\Delta_1,..., \Delta_t \}$ the set of $(-1)$
curves on $S$ satisfying $\Delta_i\cdot C=2$, and $F_i=\sum
F_{ij}$ are vertical maximal $(-2)-$divisors such that $F_{i}\cdot
\Delta_i=1$.
\\

Denote $l'=\sum_{i=1}^{s'} (l'(\Gamma_i)+1)$ and $m=\sum_{i=1}^t
(m(\Delta_i)+1)$. Then ${N_1'}^2=-l'$ and $N_2^2=-m$.
\\

With this notation in mind we have:
\\

\noindent \textbf{Theorem 3.7} \emph{Let $f: S \to \Bbb P^1$ a
non-isotrivial relatively minimal fibration on a rational surface
$S$, with general fiber $C$ of genus $g$. Then the following
statements hold:}
\\

\noindent  \emph{ i) If $C+3K_S-N_1$ is effective, then
$C+3K_S-2N_1-N_1'-N_2$ is nef, and}

\begin{equation*}
0 \leq 9K_f^2-60(g-1)+4l+l'+m.
\end{equation*}

\noindent  \emph{ii) If the gonality of $C$ is at least $6$ and $g
\geq 23$, then  $C+3K_S-N_1$ is effective.}
\\

Once again, the divisors $N_1'$ and $N_2$ can be contracted to
obtain a new non-singular surface with a pencil associated to $f$.
If $N_2$ is non-empty the generic element of this pencil will have
in general singularities. These singularities are of nodal type if
the chains $F_{ij}$ are empty.
\\

Thus, in order to produce examples of fibrations satisfying the
hypothesis of Theorems \ref{teorema1} or  \ref{teorema2} we must
start with pencils of curves with a certain bounded kind of
singularities. This is the task in section 4, where we explain how
the previous theorems can be used in order to obtain fibrations
satisfying $6(g-1)\le K_f^2$. The examples are given by blowing up
the base locus of pencils of nodal curves on minimal rational
surfaces.
\\



Before section 4, as a part of a preparatory discussion for Theorem
\ref{teorema2}, we obtain Theorem \ref{teorema3.2}, which gives a general
and uniform bound for the slope of a relatively minimal fibration.
\\

\noindent \textbf{Theorem 3.5}  \emph{ Let $f: S \to \Bbb P^1$ be
a relatively minimal fibration on a rational surface $S$. If the
genus $g$ of the fiber is greater than $11$ and the gonality is at
least $5$, then}

\begin{equation*}
(5+\frac{1}{2})(g-1)-5 \leq K_f^2
\end{equation*}

\noindent \emph{holds.}
\\

Finally, returning to the example on pencils of non-singular plane
curves, note that in order to obtain a positive quantity it is
sufficient to add $3d-9$ to $K_f^2-6(g-1)$. The number $3d-9$ is
approximately equal to $\sqrt g$. In section 5 we find another
class of fibrations satisfying an inequality of the type:

$$ 6(g-1)\le K_f^2 + O(\sqrt g),$$ (denoting by $O(\sqrt g)$ a quantity comparable to $\sqrt
g$). More precisely we prove:
\\

\noindent \textbf{Theorem 5.2} \emph{If $f:S \to  \Bbb P^1$, is a
relatively minimal fibration of genus $11\le g\le 49$ on a
rational surface $S$ such that the gonality of the general fiber
$C$ is at least $5$ and the surface $T$ obtained by blowing-down
the divisor $N_1$ satisfies that $K_T^2 < 0$, then:}

\begin{equation*}
 6(g-1) +4 -4\sqrt g \le K_f^2.
 \end{equation*}

\vspace{.3cm}

Theorem 5.2, together with the previous example gives some
evidence in the sense that probably any relatively minimal
fibration on a rational surface satisfies an inequality of the
type:

$$6(g-1)\le K_f^2 + O(\sqrt g).$$

It should be noticed that Theorem 3.3 is valid on any algebraic
surface satisfying $h^1(S,\mathcal{O}_S)= h^2(S,\mathcal{O}_S)=0$,
and all the results in our paper are still valid if moreover we
assume that $K_T^2\le 9$, with $K_T$ standing for the surface
obtained after contracting the support of the divisor $N_1$.
However, as remarked before, the inequality \newline $6(g-1)\le
K_f^2$ is true for non-isotrivial fibrations on surfaces of
non-negative Kodaira dimension. Obtaining analogous results in the
case of non-rational surfaces of negative Kodaira dimension seems
to be of natural interest.

The authors express their gratitude to anonymous referees for
their valuable comments and critics.

\section{Preliminaries and notation}

We always denote by $f:S \to \mathbb{P}^1$  a relatively minimal
fibration on a rational surface $S$. We will denote by $K_S$ a
canonical divisor of $S$.
\\

$C$ will denote a general fibre of $f$ which is assumed to
have genus $g$, and $K_f=K_S(2C)$ will be the relative canonical
sheaf of $f$.
\\

In order to simplify the notation we shall write:

\begin{equation*}
a= K_f^2 \quad \textrm{and} \quad b=g-1.
\end{equation*}

\noindent Some standard equalities are used systematically:

\begin{equation*}
C \cdot K_f=2b, \quad K_S^2=a-8b.
\end{equation*}

The following Lemma, a result taken from \cite{reider}, will be invoked
several times ($S$ is here an arbitrary surface):

\begin{lem} \label{primerlema} Let $L$ be a nef divisor on $S$.
\\

\noindent i) If $L^2\ge 5$ and $\vert L+K_S\vert=\emptyset $ then there
exists a base point free pencil $|E|$ on $S$ such that either $E \cdot L=
0 \text{ or } 1$.
\\

\noindent ii) If $L^2\ge 10$ and $\vert L+K_S\vert$ does not define a
birational map then there exists a base point free pencil $|E|$ on
$S$ such that either $E \cdot L= 1 \text{ or } 2$. \end{lem}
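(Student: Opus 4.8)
The plan is to prove both parts by Reider's method: build a rank-$2$ vector bundle from the failure of positivity of the adjoint system $|L+K_S|$ and apply Bogomolov's instability theorem. For part i), since $|L+K_S|=\emptyset$ every point $x\in S$ is a base point, so $H^0(L+K_S)\to\mathbb{C}_x$ is not surjective; from the sequence $0\to (L+K_S)\otimes\mathcal{I}_x\to L+K_S\to\mathbb{C}_x\to 0$ this produces a nonzero class in $H^1((L+K_S)\otimes\mathcal{I}_x)$, which by Serre duality lies in $\mathrm{Ext}^1(L\otimes\mathcal{I}_x,\mathcal{O}_S)$ and so defines a nonsplit extension $0\to\mathcal{O}_S\to\mathcal{E}\to L\otimes\mathcal{I}_x\to 0$. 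A single reduced point automatically satisfies the Cayley--Bacharach condition, so $\mathcal{E}$ is locally free of rank $2$ with $c_1(\mathcal{E})=L$ and $c_2(\mathcal{E})=1$. For part ii), non-birationality of $|L+K_S|$ means it fails to separate two (possibly infinitely near) points; choosing a length-$2$ subscheme $Z$ satisfying Cayley--Bacharach and running the same recipe yields a rank-$2$ bundle with $c_1(\mathcal{E})=L$ and $c_2(\mathcal{E})=2$.

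Next I would apply Bogomolov instability. The discriminant $c_1^2-4c_2$ equals $L^2-4\ge 1$ in case i) and $L^2-8\ge 2$ in case ii), hence is positive, so $\mathcal{E}$ is unstable and admits a saturated destabilizing line subbundle $\mathcal{O}_S(M)\hookrightarrow\mathcal{E}$ for which $A:=2M-L$ satisfies $A^2\ge c_1^2-4c_2>0$ and $A\cdot H>0$ for all ample $H$; in particular $A$ lies in the positive cone and $L\cdot A\ge 0$. The composite $\mathcal{O}_S(M)\to L\otimes\mathcal{I}_Z$ must be nonzero, since otherwise $\mathcal{O}_S(M)\hookrightarrow\mathcal{O}_S$ would give $M\cdot H\le 0$, contradicting $A\cdot H>0$; it therefore produces an effective divisor $E:=L-M$ meeting $Z$, with $A=L-2E$. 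Feeding $A=L-2E$ into the three constraints $A^2\ge c_1^2-4c_2$ (equivalently $E^2-L\cdot E\ge -c_2$), the Hodge index inequality $(L\cdot E)^2\ge L^2E^2$, and $0\le L\cdot E\le \tfrac12 L^2$ (from $L$ nef and $L\cdot A\ge 0$) reduces $(L\cdot E,E^2)$ to Reider's finite list: $(0,-1)$, $(0,0)$, $(1,0)$ in case i), and in case ii) the analogous list whose nonnegative members are $(1,0)$ and $(2,0)$.

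It remains to upgrade the effective divisor $E$ to a base point free pencil, and this is the step I expect to be the main obstacle. The useful observation is that a pencil $|E|$ with $E^2=0$ is automatically base point free, because two general members meet in $E^2=0$ points; thus it is enough to land in a case with $E^2=0$ and to show $h^0(E)\ge 2$. Here the strengthened global hypotheses — emptiness of $|L+K_S|$ in i) and non-birationality in ii), rather than the mere presence of a base point — should be used to discard the rigid options $E^2<0$ and to produce the second section: emptiness forces $\chi(L+K_S)\le 0$, which combined with the Riemann--Roch computation of $\chi(E)$ for $E^2=0$ is what I would use to conclude $h^0(E)\ge 2$. Reider's machinery only guarantees a possibly rigid effective divisor, and the negative self-intersection cases genuinely occur for general nef $L$, so eliminating them and making $E$ move is the delicate part; the infinitely-near and tangent-vector bookkeeping in case ii) is the most technical point of this final step.
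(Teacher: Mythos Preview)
The paper does not actually prove this lemma: its ``proof'' is a two-line citation, saying that part ii) is Corollary 2 of Reider's paper and that part i) follows by the identical argument. Your proposal is an attempt to reconstruct Reider's proof itself, so in that sense you are following the same route the paper invokes, just with the details unpacked.

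Your reconstruction of the vector bundle construction and the Bogomolov instability analysis is correct in outline. The gap is in the final step, exactly where you flag the difficulty. The mechanism you propose --- deducing $\chi(L+K_S)\le 0$ from emptiness and feeding this into Riemann--Roch for $E$ --- does not work: the inequality $\chi(L+K_S)\le 0$ controls $L\cdot K_S$, but gives no usable bound on $E\cdot K_S$, which is what $\chi(E)$ depends on when $E^2=0$. The actual argument in Reider uses the global hypothesis differently. Since \emph{every} point of $S$ is a base point in case i) (respectively, a \emph{general} length-$2$ subscheme is not separated in case ii)), one may choose the input point $x$ (resp.\ $Z$) generically. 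The resulting effective divisor $E$ then contains a general point of $S$, so it is forced to move in a positive-dimensional family; with $E^2=0$ this yields the base point free pencil. The negative cases such as $(L\cdot E,E^2)=(0,-1)$ are discarded the same way: since $L$ is nef and big, the locus $\{L\cdot D=0\}$ is a finite union of curves, so a general $x$ avoids it. (Incidentally, your case $(0,0)$ cannot occur: $L\cdot E=0$ with $L^2>0$ forces $E^2<0$ by the Hodge index theorem, as $E$ is a nonzero effective divisor.)
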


\begin{proof} ii) is just Corollary 2 in \cite{reider}. Even when i) is not
explicitly stated in \cite{reider} it follows just by the same
argument used in the proof of Corollary 2 in \cite{reider}.
\end{proof}

Now, in part for the sake of completeness, in part for
illustrating the kind of argument that will be used in the sequel,
we state and prove a Lemma that is by now well known (see \cite{konno} and \cite{tan-tu-zamora}).

\begin{lem} \label{lema2.2} Let $f$ be, as before, a relatively minimal
fibration on a rational surface $S$. Suppose that $g>0$, then $\vert C+K_S\vert$ is
effective and nef.
\end{lem}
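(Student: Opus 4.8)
The plan is to show first that $|C+K_S|$ is effective, and then that the divisor $C+K_S$ is nef, using the relative minimality of $f$. For effectivity, I would look at the exact sequence
\begin{equation*}
0 \to \Oc_S(K_S) \to \Oc_S(C+K_S) \to \Oc_C(K_C) \to 0,
\end{equation*}
coming from the adjunction formula $(C+K_S)|_C = K_C$. Since $S$ is rational we have $h^0(K_S)=h^1(\Oc_S)=0$ (in fact $h^i(\Oc_S)=0$ for $i>0$), so the associated long exact sequence gives $h^0(C+K_S) \ge h^0(K_C) = g$ (the connecting map $H^0(K_C)\to H^1(K_S)$ lands in a group whose dimension is $h^1(\Oc_S)=0$ by Serre duality). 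As $g>0$ this forces $h^0(C+K_S)>0$, hence $|C+K_S|$ is effective. This also records the fact that $\deg f_*K_f = \deg f_*(K_S+2C) = h^0(\Oc_C(K_C)) = g$ used elsewhere in the paper.

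For nefness, I would argue by contradiction. Write $K_f = K_S+2C$; it is effective because $K_f = (C+K_S)+C$ is a sum of two effective divisors. Suppose $E$ is an irreducible curve with $(C+K_S)\cdot E < 0$. Since $C$ is nef (it is a fiber of $f$), $C\cdot E \ge 0$, so either $E$ is a component of a fiber and $C\cdot E=0$, or $E$ dominates $\Bbb P^1$; in the latter case, since $(C+K_S)\cdot E<0$ we would get $K_f\cdot E = (C+K_S)\cdot E + C\cdot E$ could still be controlled — but more directly, $(C+K_S)\cdot E<0$ with $C+K_S$ effective forces $E$ to be a component of $C+K_S$ with negative self-intersection contribution. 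The cleanest route: $E\cdot C \ge 0$ and $E$ must then be vertical (if $E$ is horizontal, intersect the effective divisor $C+K_S$ with the nef divisor $C$ to see $E$ cannot drag the intersection negative in the relevant way), so $E$ is contained in a fiber $F$ with $F\cdot E=0$, giving $K_S\cdot E = (C+K_S)\cdot E < 0$ and also $E^2<0$ by the index theorem on the fiber. Then adjunction $E^2 + K_S\cdot E = 2p_a(E)-2 \ge -2$ together with $K_S\cdot E<0$ and $E^2<0$ forces $E^2 = K_S\cdot E = -1$, i.e. $E$ is a vertical $(-1)$-curve. This contradicts the hypothesis that $f$ is relatively minimal.

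The main obstacle is the nefness step: one must be careful to rule out horizontal curves $E$ making $(C+K_S)\cdot E$ negative. The key observation that closes this gap is precisely that $C+K_S$ is effective, so any curve on which it is negative must be an irreducible component of it, and combined with $C$ being nef this pins $E$ down to a vertical curve; the numerical adjunction argument then identifies $E$ as a $(-1)$-curve in a fiber, contradicting relative minimality. (This is the argument alluded to in the introduction in the discussion following the definition of $K_f$.)
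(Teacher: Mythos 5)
Your proposal is correct in substance, but it reaches effectivity by a genuinely different route from the paper. The paper does not use the restriction sequence: it observes that $h^0(K_S)=h^1(K_S)=0$ forces, via the Leray spectral sequence, $f_*K_S\cong\bigoplus^g\mathcal{O}_{\mathbb{P}^1}(-1)$, and then reads off $h^0(C+K_S)=h^0(f_*K_S\otimes\mathcal{O}_{\mathbb{P}^1}(1))=g$. Your adjunction sequence $0\to \mathcal{O}_S(K_S)\to\mathcal{O}_S(C+K_S)\to\mathcal{O}_C(K_C)\to 0$ together with $h^0(K_S)=h^1(K_S)=0$ gives the same number $g$ more elementarily; what the paper's route buys in addition is the explicit splitting of $f_*K_S$, which is what actually yields $\deg f_*K_f=g$. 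Your parenthetical claim that the restriction sequence alone ``records'' that degree is not quite right: to extract $\deg f_*K_f$ you would still need the vanishing $h^1(C+K_S)=0$ and Riemann--Roch on $\mathbb{P}^1$. For nefness the two arguments are the same in spirit (the paper delegates the verification that $E$ must be a vertical $(-1)$-curve to Calabri--Ciliberto), but your write-up tries to establish verticality of $E$ \emph{before} knowing it is a $(-1)$-curve, and the justification offered at that point (``intersect with the nef divisor $C$\dots'') does not actually exclude horizontal curves. The clean order, all of whose ingredients you already have, is: $(C+K_S)\cdot E<0$ with $C+K_S$ effective forces $E$ to be a component of an effective representative and hence $E^2<0$; $C$ nef gives $K_S\cdot E=(C+K_S)\cdot E-C\cdot E<0$; adjunction then forces $E^2=K_S\cdot E=-1$ and $p_a(E)=0$; and only now $C\cdot E=(C+K_S)\cdot E-K_S\cdot E<1$ yields $C\cdot E=0$, so $E$ is a vertical $(-1)$-curve, contradicting relative minimality.
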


\begin{proof} We can be more specific about the dimension of the
space of sections of $C+K_S$.
\\

Indeed, the surface $S$ is rational, thus $h^0(K_S)=h^1(K_S)=0$.
By Leray's spectral sequence we have also
$h^0(f_*K_S)=h^1(f_*K_S)=0$. Thus in the decomposition of $f_*K_S$
like a sum of invertible sheaves in $\Bbb P^1$ we must have:

\begin{equation*}
f_{*} K_S= \bigoplus^g \mathcal{O}_{\Bbb P^1} (-1),
\end{equation*}

\noindent  therefore,

\begin{equation*}
h^0(C+K_S)=h^0(f_* K_S \otimes \mathcal{O}_{\Bbb P^1} (1))=g.
\end{equation*}

This proves the first assertion. Now, if $(C+K_S) \cdot E<0$ for
some irreducible curve $E$, then, being $C$ nef, $E$ must be a
vertical $(-1)-$curve (see \cite{ciliberto}, proof of Proposition
4.1) and in consequence $C \cdot E=0$. But this is impossible
because $f$ is relatively minimal. This proves the Lemma.
\end{proof}

\section{Adjoint systems and the slope of $f$}

In this section we investigate the properties of the linear
systems $\vert C+mK_S \vert$ for $m=1,2,3$. We apply their
properties to the study of the slope of $f$.

\begin{lem} \label{lema5} If $b\ge 6$ and the gonality of $C$ is at least $4$,
then $h^0(C+2K_S)>0$ and $a\ge 5b$. \end{lem}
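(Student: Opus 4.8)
The plan is to argue by contradiction using Reider's criterion (Lemma \ref{primerlema}) applied to the nef divisor $L=C+K_S$, whose effectiveness and nefness are already guaranteed by Lemma \ref{lema2.2}. First I would compute $L^2=(C+K_S)^2 = C^2+2C\cdot K_S+K_S^2 = 0+2(2b)+(a-8b) = a-4b$, together with $L\cdot C = C\cdot K_S = 2b$. The bridge between the two assertions of the Lemma is the observation that $|C+2K_S| = |L+K_S|$; so if $h^0(C+2K_S)=0$ then part i) of Lemma \ref{primerlema} applies as soon as $L^2\ge 5$, i.e. as soon as $a-4b\ge 5$. I will need to handle separately the small-$a$ range $a-4b<5$, but there a direct estimate should suffice: if $a$ is that small then $K_S^2=a-8b<5-4b$ is very negative, forcing many blow-ups, and one can instead bound things via $C\cdot K_f=2b$ and positivity of $K_f$; alternatively one notes that $a-4b<5$ together with the inequalities we are about to derive is already contradictory, so the case essentially collapses.

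So assume $h^0(C+2K_S)=0$ and $L^2=a-4b\ge 5$. Reider gives a base-point-free pencil $|E|$ on $S$ with $E\cdot L\in\{0,1\}$. Since $|E|$ is base-point free with $E^2=0$ it induces a second fibration $g\colon S\to\mathbb P^1$; restricting $E$ to the general fiber $C$ of $f$ gives a base-point-free linear series of degree $E\cdot C$ on $C$. The key step is to show $E\cdot C$ is small. From $E\cdot L=E\cdot(C+K_S)=E\cdot C + E\cdot K_S$ and $E\cdot L\le 1$, I want to bound $E\cdot K_S$ from below. Here I would use that $E$ is nef with $E^2=0$, so adjunction on the (possibly reducible) general member gives $E\cdot K_S = 2p_a(E)-2-E^2 = 2p_a(E)-2\ge -2$; combined with $E\cdot L\le 1$ this yields $E\cdot C\le 3$. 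Since $E\cdot C\ge 1$ (the two fibrations are distinct, as $f$ has fibers of genus $g\ge 7>0$ while... — more carefully, $E\cdot C=0$ would make $E$ vertical for $f$, hence a sum of components of fibers, contradicting $E^2=0$ with $|E|$ base-point free unless $E$ is a multiple of $C$, but then $E\cdot L=E\cdot C+E\cdot K_S=0+$ something, and $E\cdot C=0$ is genuinely excluded), we get $1\le E\cdot C\le 3$, so $C$ carries a base-point-free $g^1_d$ with $d\le 3$, i.e. $\mathrm{gon}(C)\le 3$, contradicting the hypothesis $\mathrm{gon}(C)\ge 4$. This proves $h^0(C+2K_S)>0$.

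For the inequality $a\ge 5b$: now that $C+2K_S=L+K_S$ is effective, I would run the Zariski–Fujita decomposition $C+2K_S = P+N_1$ described in the excerpt, where $N_1$ is the explicit negative part supported on the $(-1)$-sections and their attached $(-2)$-chains, with $N_1^2=-l$, and $P=C+2K_S-N_1$ is nef. Expanding $0\le P^2 = (C+2K_S)^2 - 2(C+2K_S)\cdot N_1 + N_1^2$ and using that $P$ nef forces $P\cdot N_1\le$ (the relevant cross terms vanish by construction of the Zariski decomposition, $P\cdot N_1=0$), one gets $0\le P^2=(C+2K_S)^2+N_1^2 = (4b - \text{[correction]})$; carrying out the bookkeeping with $(C+2K_S)^2 = C^2+4C\cdot K_S+4K_S^2 = 0+8b+4(a-8b)=4a-24b$ gives $0\le 4a-24b-l$ — this is exactly statement i) of Theorem 3.3, and here $l\le$ (number of $(-1)$-sections). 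The cleanest route to $a\ge 5b$ from this is to bound $l$ directly: each $(-1)$-section $\Gamma$ satisfies $\Gamma\cdot C=1$ and contributes a term to $N_1$, and an independent count of how many disjoint $(-1)$-sections can sit on $S$ (using $K_S^2=a-8b$ and the fact that blowing them down raises $K^2$ by $1$ each, staying $\le 9$) gives $l\le a-8b+\text{const}$ roughly; substituting into $4a-24b\ge l$ and solving yields $a\ge 5b$ in the stated range $b\ge 6$.

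The main obstacle I anticipate is the bound on $E\cdot K_S$ in the Reider step: the general member of $|E|$ need not be irreducible, so "adjunction gives $E\cdot K_S\ge -2$" requires care — one must either argue the pencil can be taken with irreducible general member, or decompose $E$ into components and control each. A secondary technical point is the low-$a$ boundary case $a-4b<5$, and the precise combinatorial bound on $l$ (equivalently, on the number of $(-1)$-sections) needed to pass from Theorem 3.3 i) to the clean inequality $a\ge 5b$; pinning down that the constant works down to $b=6$ is where the hypotheses on the genus are really consumed.
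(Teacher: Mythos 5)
The paper does not actually prove this lemma internally: its ``proof'' is a one-line citation of Corollary 4.4 of Konno's \emph{Clifford index and the slope of fibered surfaces}, so your self-contained Reider argument is a genuinely different route. The first half of your proposal (effectivity of $C+2K_S$) is sound in outline, but the deferred case $L^2=(C+K_S)^2=a-4b<5$ is a real gap: nefness of $C$ and $C+K_S$ only gives $a\ge 4b$ a priori, so $L^2\in\{0,\dots,4\}$ is not excluded, and it does not ``collapse'' by itself. It can be closed by a separate argument: $h^0(C+K_S)=g$ and the restriction of $|C+K_S|$ to $C$ is the full canonical system $|K_C|$ (from $0\to K_S\to C+K_S\to K_C\to 0$ and $h^0(K_S)=h^1(K_S)=0$), so if $L^2\le 4<g-2$ the image of the associated map cannot be a nondegenerate surface in $\mathbb{P}^{g-1}$, forcing $|C+K_S|$ to be composed with a pencil $|E|$ with $E\cdot C\le 2$, which contradicts $\mathrm{gon}(C)\ge 4$ (or contracts $C$ under its canonical map). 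The irreducibility issue for the Reider pencil, which you rightly flag, is handled by Stein factorization: the base is again $\mathbb{P}^1$ since $S$ is rational, and the connected fibre $E_0$ still satisfies $E_0\cdot L\le 1$, $E_0^2=0$, $E_0\cdot K_S\ge -2$.

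The second half is where the proposal actually fails. The sign in your key inequality is wrong: for the Zariski--Fujita decomposition $C+2K_S=P+N_1$ with $P\cdot N_1=0$ one has $P^2=(C+2K_S)^2-N_1^2=4a-24b+l$, not $4a-24b-l$ (compare Theorem \ref{teorema1} i)). With your sign, $l\ge0$ would immediately give $a\ge 6b$, which is false --- the pencils of plane curves in the introduction violate it. With the correct sign, $P^2\ge0$ gives $a\ge 6b-\frac{l}{4}$ and you need an \emph{upper} bound on $l$ (note also that $l\ge s$, the number of $(-1)$-sections, not $l\le s$ as you write). The only bound available at this stage is $K_T^2=a-8b+l\le 9$, i.e.\ $l\le 9-a+8b$, and combining it with $P^2\ge0$ yields only $a\ge\frac{16b-9}{3}$, which is $\ge 5b$ precisely when $b\ge 9$; the cases $b=6,7,8$ of the lemma are not reached. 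The efficient route, which you miss, is that the two conclusions are essentially one statement: Riemann--Roch gives $\chi(C+2K_S)=1+a-5b$, $h^2(C+2K_S)=h^0(-C-K_S)=0$, and once $a>4b$ the divisor $C+K_S$ is nef and big, so Mumford vanishing gives $h^1(C+2K_S)=0$; hence $h^0(C+2K_S)=1+a-5b$ and effectivity is literally equivalent to $a\ge 5b$. So your Reider step, once the low-$L^2$ case is dispatched, already proves both claims, and the Zariski-decomposition detour is both unnecessary and, as written, incorrect.
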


\begin{proof}  This follows easily from Corollary 4.4 of \cite{konno3}.








\end{proof}

If $C+2K_S$ is effective, it admits a Zariski-Fujita decomposition as the  sum of a
nef divisor and a negative part, let us compute this negative part. \\

Let

\begin{equation*}
N_1 = \sum_{i=1}^{s} \big[(l(\Gamma_i)+1)\Gamma_i +
\sum_{j=1}^{\l(\Gamma_i)}(l(\Gamma_i)-j+1)E_{ij} \big],
\end{equation*}

\noindent where $\{ \Gamma_1,...,\Gamma_s \}$ is the set of $(-1)-$sections of
$f$ and $\sum E_{ij}$ is a maximal chain (possibly empty) of vertical $(-2)$ curves
satisfying:

\begin{equation*}
\Gamma_i \cdot E_{i1}=1, \quad E_{ik} \cdot E_{im}=\left\{ 
        \begin{tabular}{cc}
             1 \quad &\textrm{ if  $\vert k-m\vert =1$}, \\
             0    &\textrm{otherwise,} \\
        \end{tabular}
\right.
\end{equation*}

\noindent and $l(\Gamma_i)$ will denote the length of the maximal
chain $\sum E_{ij}$. If the chain is empty, then we convey that
$l(\Gamma_i)=0$. In the language of \cite{barth}, this is
expressed by saying that the divisor $E_i=\sum E_{ij}$ is a
$(-2)-$curve and $\Gamma_i \cdot E_i=1$. We prefer to call  such a
chain a $(-2)-$divisor, in order to distinguish it from the
irreducible case. It should be noticed that there is only one
maximal connected $(-2)-$divisor (possibly empty) attached to each
$(-1)-$section.
\\

We will also use  the notation $\Gamma=\sum_{i=1}^s \Gamma_i$.
\\

\begin{lem} \label{lemaN1} The negative part of $C+2K_S$ in the Zariski-Fujita decomposition is $N_1$ and $N_1^2=-l$, where $l=\sum_{i=1}^s (l(\Gamma_i) +1)$.

\end{lem}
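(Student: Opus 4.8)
The plan is to work directly from the Zariski--Fujita decomposition. Since $C+2K_S$ is effective it admits a decomposition $C+2K_S=P+N$ with $P$ nef, $N\ge 0$, $\mathrm{Supp}(N)$ negative definite and $P\cdot Z=0$ for every irreducible component $Z$ of $N$. I would identify $\mathrm{Supp}(N)$ and the multiplicities of $N$ explicitly, check that they match the prescription defining $N_1$, conclude $N=N_1$ by uniqueness of the decomposition, and finally compute $N_1^2=-l$.

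First, I would pin down the curves of negative intersection with $C+2K_S$. If $E$ is irreducible with $(C+2K_S)\cdot E<0$ then $E\in\mathrm{Supp}(N)$ (otherwise $P\cdot E\le(C+2K_S)\cdot E<0$, contradicting $P$ nef), so $E^2<0$. Since $C$ and, by Lemma~\ref{lema2.2}, $C+K_S$ are nef, from $(C+2K_S)\cdot E<0\le(C+K_S)\cdot E$ we get $K_S\cdot E=(C+2K_S)\cdot E-(C+K_S)\cdot E<0$, hence $C\cdot E\ge -K_S\cdot E\ge 1$; and $C\cdot E=0$ would contradict nefness of $C+K_S$. If $C\cdot E\ge 2$, then $C\cdot E<-2K_S\cdot E$ forces $K_S\cdot E\le -2$, and adjunction gives $E^2=2p_a(E)-2-K_S\cdot E\ge 0$, a contradiction. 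Thus $C\cdot E=1$, so $E\cong\mathbb{P}^1$, $K_S\cdot E=-1$ and $E^2=-1$: the curves of negative intersection with $C+2K_S$ are exactly the $(-1)$-sections $\Gamma_1,\dots,\Gamma_s$, and all of them lie in $\mathrm{Supp}(N)$.

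The main step is to determine the shape of a connected component $\mathcal{C}$ of $\mathrm{Supp}(N)$. Since $N|_{\mathcal{C}}$ is a nonzero effective divisor with $(N|_{\mathcal{C}})^2<0$, some component of $\mathcal{C}$ meets $C+2K_S$ negatively and is therefore one of the $\Gamma_i$. Negative definiteness forces distinct $(-1)$-sections to be disjoint. For every other component $Z$ of $\mathcal{C}$ one has $(C+2K_S)\cdot Z=N\cdot Z\ge 0$ and $Z^2<0$; combining this with nefness of $C+K_S$, adjunction, relative minimality (no vertical $(-1)$-curves) and the relations $P\cdot Z=0$ (which constrain the multiplicities of $N$ along $\mathcal{C}$), one checks that $Z$ must be a vertical $(-2)$-curve and that $\mathcal{C}$ contains no second $(-1)$-curve. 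The $(-2)$-curves of $\mathcal{C}$ are vertical and connected, hence lie in a single fibre; contracting $\Gamma_i$, and using that the contracted configuration stays negative definite, shows that $\Gamma_i$ meets exactly one of them and that $\mathcal{C}$ is a linear chain $\Gamma_i-E_{i1}-\dots-E_{ip}$. Finally, nefness of $P$ forces this chain to be maximal: a vertical $(-2)$-curve prolonging it but not lying in $\mathrm{Supp}(N)$ would meet $N$ with strictly positive multiplicity, against $P\ge 0$. Hence $\mathrm{Supp}(N)=\mathrm{Supp}(N_1)$.

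It then remains to compute the multiplicities and the self-intersection. Writing the part of $N$ on the $i$-th block as $m_0\Gamma_i+\sum_{j\ge1}m_jE_{ij}$, the relations $P\cdot E_{ij}=0$ turn, via $C\cdot E_{ij}=K_S\cdot E_{ij}=0$ and the chain relations, into $m_{j-1}-2m_j+m_{j+1}=0$ with $m_{l(\Gamma_i)+1}=0$, and $P\cdot\Gamma_i=0$ together with $(C+2K_S)\cdot\Gamma_i=-1$ gives $m_0=m_1+1$; solving, $m_0=l(\Gamma_i)+1$ and $m_j=l(\Gamma_i)-j+1$, exactly the coefficients of $N_1$. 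So $N=N_1$. For the self-intersection, the blocks $B_i=(l(\Gamma_i)+1)\Gamma_i+\sum_j(l(\Gamma_i)-j+1)E_{ij}$ are pairwise disjoint, and a direct computation using $\Gamma_i^2=-1$, $E_{ij}^2=-2$, $\Gamma_i\cdot E_{i1}=1$ and the chain relations gives $B_i\cdot\Gamma_i=-1$ and $B_i\cdot E_{ij}=0$ for all $j$, so $B_i^2=-(l(\Gamma_i)+1)$ and $N_1^2=\sum_iB_i^2=-\sum_i(l(\Gamma_i)+1)=-l$. The crux is the third paragraph: proving that negative definiteness, nefness of $P$ and relative minimality force $\mathrm{Supp}(N)$ to be exactly a disjoint union of blocks, each a $(-1)$-section with a maximal vertical chain of $(-2)$-curves attached, without extraneous components or branching.
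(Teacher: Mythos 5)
Your overall strategy is legitimate and genuinely different from the paper's: you work with the static characterization of the Zariski--Fujita decomposition ($P$ nef, $\mathrm{Supp}(N)$ negative definite, $P\cdot Z=0$ on components of $N$) plus uniqueness, whereas the paper runs the Zariski--Fujita \emph{algorithm} step by step, at each stage contracting the curves already found and re-applying the ``which curves meet an adjoint divisor negatively'' analysis on the blown-down surface. Your second paragraph (identification of the curves with $(C+2K_S)\cdot E<0$ as exactly the $(-1)$-sections, via nefness of $C$ and $C+K_S$ and adjunction) is correct and in fact more carefully argued than the corresponding step in the paper; your derivation of the multiplicities from the linear relations $P\cdot\Gamma_i=-N\cdot\Gamma_i=\cdots$ and $m_{j-1}-2m_j+m_{j+1}=0$ is clean, and the computation $B_i^2=-(l(\Gamma_i)+1)$ via $B_i\cdot\Gamma_i=-1$, $B_i\cdot E_{ij}=0$ is correct and arguably tidier than the paper's direct expansion of $N_1^2$.

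The problem is that your third paragraph --- which you yourself identify as the crux --- is not a proof but a list of ingredients. ``One checks that $Z$ must be a vertical $(-2)$-curve and that $\mathcal{C}$ contains no second $(-1)$-curve'' and ``contracting $\Gamma_i$ \dots shows that $\mathcal{C}$ is a linear chain'' is precisely where all the content of the lemma sits, and none of it is executed. Concretely, you must exclude: (a) horizontal components of $\mathrm{Supp}(N)$ other than the $\Gamma_i$, e.g.\ a $(-1)$-curve $\Delta$ with $C\cdot\Delta=2$ (note these are exactly the curves that \emph{do} enter the negative part of $C+3K_S$ as the divisor $N_2$, so they cannot be dismissed on general grounds --- one has to use that a configuration $\Gamma_i-E_{i1}-\cdots-E_{ik}-\Delta$ contracts to a curve of self-intersection $\ge 0$ and hence is not negative definite); (b) components with $K_S\cdot Z\ge 0$ that meet $C$; and (c) branching of the chain. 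Each of these requires actually running the ``contraction preserves negative definiteness'' argument (or, as the paper does, passing to the surface $T$ obtained by contracting the curves already collected and repeating the first-step analysis for $C_0+K_T$ there). Until that paragraph is filled in, the identification $\mathrm{Supp}(N)=\mathrm{Supp}(N_1)$ --- and hence the whole lemma --- is unproved. I would encourage you to complete it, since the surrounding machinery you set up does suffice; but as written the proposal has a genuine gap at its central step.
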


\begin{proof}
Although our calculus relies on the argument used in
\cite{ciliberto} Proposition 4.1, it is slightly different and
uses the Zariski-Fujita algorithm in order to obtain a more
explicit expression.
\\

If $(C+2K_S)\cdot D< 0$ for some irreducible curve $D$, then being
$C$ nef and $D^2<0$ we must have that $D$ is a $(-1)$-curve such
that $C\cdot D\le 1$. Since $f$ is relatively minimal we conclude
that $D$ is a $(-1)$-section.
\\

Next we need to find the
irreducible curves $D$ such that:

\begin{equation} \label{equation1}
(C+2K_S-\Gamma)\cdot D<0.
\end{equation}

Since $D\ne \Gamma_i$ we have that it is not a $(-1)-$ curve,
therefore $D\cdot K_S\ge 0$ and $D\cdot \Gamma >0$.
\\

Let $T$ be the surface obtained by contracting $\Gamma$. Then in
$T$, (\ref{equation1}) becomes:

\begin{equation*}
(C_0 + K_T)\cdot \pi_* D< 0,
\end{equation*}

\noindent with $\pi$ standing for the contraction and $C_0\in
\vert \pi (C)\vert$.
\\

Just by the same argument as before we get that $\pi_* D$ is a
$(-1)-$curve with $0\le C_0 \cdot \pi_* D\le 1$. In particular
$\Gamma \cdot D\le 1$. Thus we obtain that $D\cdot \Gamma=1$,
$K_S\cdot \Gamma=0$ and $C\cdot D=0$. In this way $D=E_{i1}$ for
some $i$.
\\

Now, the Zariski-Fujita algorithm commands solving the system of
equation in $\alpha_j$, $\beta_k$:

\begin{align*}
 (C+2K_S -\Gamma)\cdot \Gamma_i & =(\sum \alpha_j \Gamma_j + \sum
\beta_k E_{k1})\cdot \Gamma_i, \\
(C+2K_S -\Gamma)\cdot E_{i1} & = (\sum \alpha_j \Gamma_j + \sum
\beta_k E_{k1})\cdot E_{i1}, \end{align*}

\noindent for $i= 1,...s$. It is easy to see that the solution is $\alpha_j=\beta_k=1$ for
all $j$ and $k$. So in this step we need to subtract $\Gamma +
\sum E_{i1}$ to $C+2K_S-\Gamma$.
\\

The rest of the computation is iterative. Assume that on some step
of the algorithm we have obtained the divisor:

\begin{equation*}
 C+ 2K_S -N,
\end{equation*}

\noindent with $N= \sum_i [(l_i+1) \Gamma_i +\sum_{j=1}^{l_i} (l_i
-j+1) E_{ij}].$
\\

Let $T$ now be the surface obtained by contracting the support of
$N$. We obtain, with the analogous notation:

\begin{equation*}
 (C_0+K_T)\cdot \pi_* D< 0,
\end{equation*}

\noindent therefore, as before, $C\cdot D=0$ and $N\cdot  D=1$. So, $D$ must
be equal to $E_{i,l_i+1}$ for some $i$. The Zariski-Fujita
algorithm leads again to subtract $\sum_i [ \Gamma_i
+\sum_{j=1}^{l_i+1} E_{ij}]$.  Therefore, the negative part is:

\begin{equation*}
N_1 = \sum_{i=1}^{s} \big[(l(\Gamma_i)+1)\Gamma_i +
\sum_{j=1}^{\l(\Gamma_i)}(l(\Gamma_i)-j+1)E_{ij} \big].
\end{equation*}

Now we will calculate $N_1^2$:

\begin{align*}
N_1^2&= \Big(\sum_{i=1}^{s} \big[(l(\Gamma_i)+1)\Gamma_i +
\sum_{j=1}^{\l(\Gamma_i)}(l(\Gamma_i)-j+1)E_{ij} \big] \Big)^2\\
&=  \sum_{i=1}^{s} \big[(l(\Gamma_i)+1)\Gamma_i +
\sum_{j=1}^{\l(\Gamma_i)}(l(\Gamma_i)-j+1)E_{ij} \big]^2 \\
&(\textrm{because $\Gamma_i \cdot \Gamma_k= \Gamma_k \cdot E_{ij}=E_{kj} \cdot E_{ij}=0$ if $k \neq i$})\\
&=\sum_{j=1}^{\l(\Gamma_i)} \big[ -(l(\Gamma_i)+1)^2+2(l(\Gamma_i)+1)l(\Gamma_i)+(\sum_{j=1}^{l(\Gamma_i)}(l(\Gamma_i)-j+1)E_{ij})^2 \big] \\
&(\textrm{because $\Gamma_i \cdot E_{ij}=0$ if $j > 1$})\\
&= \sum_{j=1}^{\l(\Gamma_i)} \big[ -(l(\Gamma_i)+1)^2+2(l(\Gamma_i)+1)l(\Gamma_i)-2\sum_{j=1}^{l(\Gamma_i)-1}(l(\Gamma_i)-j+1)-2\big]\\
&(\textrm{because $(\sum_{j=1}^{l(\Gamma_i)}(l(\Gamma_i)-j+1)E_{ij})^2=\sum_{j=1}^{l(\Gamma_i)}(-2(l(\Gamma_i)-j+1)^2)+2\sum_{j=1}^{l(\Gamma_i)-1}(l(\Gamma_i)-j+1)(l(\Gamma_i)-j)$})\\
&=\sum_{j=1}^{\l(\Gamma_i)} \big[-(l(\Gamma_i)+1)(l(\Gamma_i)+1-2l(\Gamma_i)+l(\Gamma_i))\big]=-l.
\end{align*}

\end{proof}



After these computations we have:

\begin{teo} \label{teorema1} Let $f: S \to \Bbb P^1$ be a relatively
minimal fibration on a rational surface $S$, with general fiber
$C$ of genus $g$. Then the following statements hold:

i) If $C+2K_S$ is effective, then $C+2K_S-N_1$ is nef, and

\begin{equation*}
0 \leq 4K_f^2-24(g-1)+l.
\end{equation*}

ii) If $g \geq 7$ and the gonality of $C$ is at least $4$, then  $C+2K_S$ is effective.

iii) If $g \geq 11$ and the gonality of $C$ is at least
5, then $C+2K_S-N_1$ is also big and

\begin{equation*}
0 \leq 3K_f^2 -19 (g-1) +l +1.
\end{equation*}

\noindent In particular if $l+1\leq g-1$ then $a\geq 6b$.
\end{teo}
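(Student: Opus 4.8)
The plan is to treat the three parts in order, each building on the previous one. For part i), the key observation is that $N_1$, as computed in Lemma \ref{lemaN1}, is exactly the negative part in the Zariski--Fujita decomposition of $C+2K_S$; hence $P_1 := C+2K_S-N_1$ is nef by construction of the algorithm. Then I would simply expand $P_1^2 \ge 0$. Writing $P_1^2 = (C+2K_S)^2 - 2(C+2K_S)\cdot N_1 + N_1^2$, I use that $N_1$ is contained in the negative part so $(C+2K_S-N_1)\cdot N_i=0$ for each component, which forces $(C+2K_S)\cdot N_1 = N_1^2 = -l$. Therefore $P_1^2 = (C+2K_S)^2 + l$. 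Now $(C+2K_S)^2$ is computed from the standard identities in Section 2: since $K_f = K_S(2C)$, one has $(C+2K_S)^2 = (K_f + K_S - 2C + 2K_S)^2$... more directly, $C+2K_S = K_f + (2K_S - 2C + 2C - 2C)$; cleanest is $C + 2K_S = (K_S+2C) + K_S - 2C = K_f + K_S - 2C$, so using $K_S^2 = a - 8b$, $C\cdot K_f = 2b$, $C^2=0$, $C\cdot K_S = 2b-2$, I expand $(C+2K_S)^2 = K_f^2 + K_S^2 + 4C^2 + 2K_f\cdot K_S - 4 K_f\cdot C - 4 K_S\cdot C$. Substituting and simplifying should yield $(C+2K_S)^2 = 4K_f^2 - 24(g-1)$, and then $0\le P_1^2 = 4K_f^2 - 24(g-1)+l$ is exactly the claimed inequality. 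This is the routine computational step; the only thing to be careful about is the bookkeeping of intersection numbers, which the author has already set up.

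For part ii), I would invoke Lemma \ref{lema5}, which states precisely that if $b\ge 6$ (i.e. $g\ge 7$) and the gonality of $C$ is at least $4$, then $h^0(C+2K_S)>0$; so $C+2K_S$ is effective and there is nothing further to prove. Part iii) is where the real work lies. Assuming $g\ge 11$ and $\mathrm{gon}(C)\ge 5$, I must first show $P_1 = C+2K_S-N_1$ is big, not merely nef. The natural route is to argue by contradiction using Lemma \ref{primerlema}: if $P_1$ were nef but not big then $P_1^2=0$, and if $|P_1+K_S|$ fails to be birational or is empty then one of the two cases of Lemma \ref{primerlema} produces a base-point-free pencil $|E|$ with $E\cdot P_1 \in\{0,1,2\}$; since $C$ is (up to the negative contributions) essentially the moving part of $P_1$, such a pencil would give a map to $\mathbb P^1$ with small degree on the fibers $C$, contradicting $\mathrm{gon}(C)\ge 5$. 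Here the gonality hypothesis is exactly calibrated to rule out pencils of degree $\le 4$ on $C$. The hard part will be relating $E\cdot P_1$ to $E\cdot C$ cleanly — one needs to control the contribution of $N_1$ and of $K_S$ to this intersection, and to ensure $P_1^2\ge 10$ (respectively $\ge 5$) so that Lemma \ref{primerlema} applies, which in turn uses the numerical estimate $P_1^2 = 4K_f^2-24(g-1)+l \ge$ (a suitable bound) coming from Lemma \ref{lema5}'s conclusion $a\ge 5b$ together with $l\ge 0$.

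Once $P_1$ is big, the final inequality follows from Riemann--Roch: $\chi(P_1+K_S)\ge h^0(P_1+K_S)\ge 0$, and since $P_1$ is big and nef, $h^1(P_1+K_S)=h^2(P_1+K_S)=0$ by Kawamata--Viehweg vanishing, so in fact $\chi(P_1+K_S) = h^0(P_1+K_S)\ge 0$. By Riemann--Roch on the rational surface $S$, $\chi(P_1+K_S) = \chi(\mathcal O_S) + \tfrac12(P_1+K_S)\cdot P_1 = 1 + \tfrac12(P_1^2 + P_1\cdot K_S)$, so $P_1^2 + P_1\cdot K_S \ge -2$. I then expand $P_1\cdot K_S = (C+2K_S-N_1)\cdot K_S$ using $C\cdot K_S = 2b-2$, $K_S^2 = a-8b$, and $N_1\cdot K_S = \sum(l(\Gamma_i)+1)\cdot 0 + (\text{terms from }E_{ij}) = 0$ since every $\Gamma_i$ is a $(-1)$-section with $K_S\cdot\Gamma_i=-1$... wait — $K_S\cdot\Gamma_i = -1$ for a $(-1)$-curve, and $K_S\cdot E_{ij}=0$, so $N_1\cdot K_S = -\sum_i(l(\Gamma_i)+1) = -l$; I would carry this through carefully. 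Combining $P_1^2 = 4a - 24b + l$ with $P_1\cdot K_S$ (which works out to involve $a$, $b$, $l$) and the bound $\ge -2$ should collapse to $0 \le 3K_f^2 - 19(g-1) + l + 1$ after dividing. Finally, if $l+1\le g-1 = b$, substituting into $0\le 3a - 19b + l + 1 \le 3a - 19b + b = 3a - 18b$ gives $a\ge 6b$, which is the statement $6(g-1)\le K_f^2$. The main obstacle throughout is part iii)'s bigness argument: correctly applying Reider's criterion (Lemma \ref{primerlema}) to $P_1$ requires verifying the numerical threshold on $P_1^2$ and translating the resulting low-degree pencil into a contradiction with the gonality bound, and this is where the hypotheses $g\ge 11$ and $\mathrm{gon}(C)\ge 5$ must be used in a genuinely non-trivial way.
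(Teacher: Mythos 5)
Your parts i) and ii) and your Riemann--Roch/vanishing computation at the end of iii) coincide with the paper's argument: $P_1=C+2K_S-N_1$ is nef by construction of the Zariski--Fujita decomposition, orthogonality of the nef part with the components of $N_1$ gives $P_1^2=(C+2K_S)^2-N_1^2=4a-24b+l\ge 0$, part ii) is literally Lemma \ref{lema5}, and once bigness is known, $0\le\chi(P_1+K_S)=1+\tfrac12 P_1\cdot(P_1+K_S)=3a-19b+l+1$ is exactly the paper's last step (your $N_1\cdot K_S=-l$ is right; note though that $C\cdot K_S=2b$, not $2b-2$, by adjunction with $C^2=0$).

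The genuine gap is your bigness argument in iii). You propose to apply Lemma \ref{primerlema} to $L=P_1$ and study $|P_1+K_S|$, but this cannot get started: the numerical hypothesis $L^2\ge 5$ (or $\ge 10$) is precisely what you do not have, since for a nef divisor bigness is equivalent to $L^2>0$, and the only a priori bound is $P_1^2=4a-24b+l\ge -4b+l$ from $a\ge 5b$, which need not even be positive. Assuming $P_1^2\ge 10$ in order to prove $P_1^2>0$ is circular; and even if Reider applied, it would yield information about the system $|P_1+K_S|=|C+3K_S-N_1|$, not about bigness of $P_1$. The paper's route is different and is the missing idea: apply Lemma \ref{primerlema} ii) to $L=C+K_S$, which is nef by Lemma \ref{lema2.2} and satisfies $L^2=a-4b\ge b\ge 10$ by Lemma \ref{lema5} and $g\ge 11$. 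If $|L+K_S|=|C+2K_S|$ were not birational, one would get a base-point-free pencil $|E|$ with $E\cdot(C+K_S)\in\{1,2\}$, hence $2g_E-2+C\cdot E\in\{1,2\}$, forcing $C\cdot E\le 4$ and contradicting $\mathrm{gon}(C)\ge 5$. One then invokes the general fact (\cite{badescu}, 14.18) that if $|D|$ defines a birational map, the nef part of its Zariski decomposition is big. Without this detour through the birationality of $|C+2K_S|$, the bigness of $P_1$ --- and hence the inequality $0\le 3a-19b+l+1$ --- is not established.
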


\begin{proof} Parts i) and ii) follow from
Lemma 3.2, the inequality in i) merely expresses the fact that
$(C+2K_S-N_1)^2\ge 0$.
\\

We proceed with the proof of iii). We will prove that $\vert C +
2K_S\vert$ defines a birational map. This would imply that the nef
part of the divisor is big (see \cite{badescu}, 14.18).
\\

Assume, for contradiction, that $\vert C + 2K_S\vert$ does not
define a birational map.We know, by Lemma \ref{lema5}, that $a\ge 5b$,
thus, $(C+K_S)^2=a-4b\ge b \ge 10$, since $g\ge 11$. By Lemma \ref{primerlema}
ii), $S$ admits a base point free pencil $\vert E\vert$ with
$E \cdot (C+K_S)=1 \text{ or } 2$. If, for instance, $E \cdot (C+K_S)=1$, then

$$ 1=K_S \cdot E+C \cdot E=2g_E-2+C \cdot E.$$

The only possibility for this equality holding is $g_E=0$ and
$C \cdot E=3$, but then $C$ must be trigonal. Similarly, $E \cdot (K_S+C)=2$
implies that $C$ is tetragonal or hyperelliptic. The last
assertion follows from Mumford's vanishing theorem:

\begin{align*}
 0&\le \chi(C+3K_S-N_1)=h^0(C+3K_S-N_1)\\
&=\frac{(C+2K_S-N_1) \cdot (C+3K_S-N_1)}{2} + 1=3K_f^2-19(g-1)+l+1.
\end{align*}

\end{proof}

Now, we can make a similar analysis for the linear system $\vert
C+3K_S\vert$. We start by proving the following bound for $l$:

\begin{lem}   \label{lemal}  Assume $g\ge 11$ and the gonality of $C$ is
at least $5$, then

$$l\le \frac{5}{2}b + 14.$$
\end{lem}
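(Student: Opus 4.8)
The plan is to sandwich $l$ between the lower bound produced by Theorem \ref{teorema1} iii) and an upper bound coming from the elementary observation that blowing down $N_1$ yields a smooth rational surface, on which the canonical square is at most $9$.

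First I would invoke Theorem \ref{teorema1} iii): since $g\ge 11$ and the gonality of $C$ is at least $5$, it gives
\begin{equation*}
0\le 3K_f^2-19(g-1)+l+1,
\end{equation*}
that is, with the abbreviations $a=K_f^2$ and $b=g-1$, the inequality $l\ge 19b-3a-1$. Next I would examine the surface $T$ obtained by contracting the support of $N_1$. By Lemma \ref{lemaN1} this support is a disjoint union of chains, each consisting of a $(-1)$-section $\Gamma_i$ followed by the attached maximal $(-2)$-chain $E_{i1},\dots,E_{i\,l(\Gamma_i)}$; after contracting $\Gamma_i$ the curve $E_{i1}$ becomes a $(-1)$-curve, after contracting that one $E_{i2}$ becomes a $(-1)$-curve, and so on, so that collapsing all of $N_1$ is a composition of $l=\sum_i (l(\Gamma_i)+1)$ successive blow-downs of $(-1)$-curves. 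Hence $T$ is a smooth rational surface, $K_T^2=K_S^2+l=(a-8b)+l$, and since every smooth rational surface satisfies $K_T^2\le 9$ we obtain $a\le 8b+9-l$.

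The remaining step is a substitution: combining the two bounds,
\begin{equation*}
l\ge 19b-3a-1\ge 19b-3(8b+9-l)-1=3l-5b-28,
\end{equation*}
so that $2l\le 5b+28$, i.e. $l\le \frac{5}{2}b+14$. I do not expect a serious obstacle here. The only points requiring care are: verifying that each contraction in the middle paragraph really is a blow-down of a $(-1)$-curve, so that $K^2$ increases by exactly $1$ and smoothness is preserved at every stage (this is immediate from the structure of $N_1$ recorded in Lemma \ref{lemaN1}: the $\Gamma_i$ are disjoint $(-1)$-sections, $\Gamma_i\cdot E_{i1}=1$, the $E_{ij}$ form a linear chain, and the chains attached to distinct sections are disjoint); and checking that the hypotheses $g\ge 11$ and that the gonality of $C$ is at least $5$ are precisely what Theorem \ref{teorema1} iii) requires.
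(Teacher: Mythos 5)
Your proof is correct and follows essentially the same route as the paper: both combine Theorem \ref{teorema1} iii) with the observation that contracting the $l$ components of $N_1$ is a sequence of $(-1)$-curve blow-downs yielding a smooth rational surface $T$ with $K_T^2=K_S^2+l=a-8b+l\le 9$. The paper merely packages the final algebra as the identity $h^0(C+3K_S-N_1)-1+2l-3K_T^2=5b$ before using $h^0\ge 0$ and $K_T^2\le 9$, which is the same computation you carry out by direct substitution.
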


\begin{proof} Let $\pi:S \to T$ be the contraction of the divisor
$(N_1)_{\text{red}}$. After contracting $\Gamma_i$, $E_{i1}$
is contracted to a $(-1)$ curve, and the same is valid for
$E_{i,j+1}$ after contracting $E_{i,j}$, so we see that $T$ is a
nonsingular surface.
\\

Moreover,

$$K_S= \pi^* K_T + N_1,$$
and,

$$ K_S^2= (\pi^* K_T + N_1)^2= a-8b.$$

Thus, $\pi^* K_T^2=a-8b +l$. Combining this with part iii) of
Theorem \ref{teorema1}, we obtain the equality:

$$ h^0(C+3K_S-N_1)-1 + 2l -3\pi^* K_T^2=5b.$$

Using $K_T^2\le 9$ we obtain the desired inequality.
\end{proof}

So far we have obtained, under the hypothesis of Theorem  \ref{teorema1}, a bound for the slope of $f$:

\begin{teo} \label{teorema3.2} Let $f: S \to \Bbb P^1$ be a relatively minimal fibration on a rational surface $S$. If the genus $g$ of the fiber is greater than or equal
to $11$ and the
gonality of $C$ is at least $5$ then
\begin{equation*}
(5+\frac{1}{2})(g-1)-5 \leq K_f^2.
\end{equation*}
\end{teo}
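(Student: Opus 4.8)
The plan is to combine part iii) of Theorem \ref{teorema1} with the bound on $l$ just obtained in Lemma \ref{lemal}. Recall that Theorem \ref{teorema1} iii) gives, under the hypotheses $g\ge 11$ and gonality at least $5$, the inequality
\begin{equation*}
0\le 3K_f^2-19(g-1)+l+1,
\end{equation*}
or equivalently $19b-l-1\le 3a$ in the abbreviated notation $a=K_f^2$, $b=g-1$. The idea is simply to eliminate $l$ between this and the upper bound $l\le \tfrac{5}{2}b+14$ supplied by Lemma \ref{lemal}.

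First I would substitute: since $l\le \tfrac52 b+14$, we have $19b-l-1\ge 19b-(\tfrac52 b+14)-1=\tfrac{33}{2}b-15$. Plugging this into $19b-l-1\le 3a$ yields
\begin{equation*}
\frac{33}{2}b-15\le 3a,
\end{equation*}
and dividing by $3$ gives $a\ge \tfrac{11}{2}b-5=(5+\tfrac12)(g-1)-5$, which is exactly the claimed inequality. So the whole proof is a two-line algebraic manipulation once Lemma \ref{lemal} is in hand.

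There is essentially no obstacle at this stage: the real content has already been pushed into Theorem \ref{teorema1} iii) (whose proof uses Reider's Lemma \ref{primerlema} ii), the gonality hypotheses, and Mumford vanishing) and into Lemma \ref{lemal} (which uses the relation $K_S=\pi^*K_T+N_1$ together with the inequality $K_T^2\le 9$ for the minimal model $T$). The only thing to be careful about is bookkeeping — making sure the direction of each inequality is preserved when one replaces $l$ by its upper bound inside the term $-l$, and confirming that the hypotheses of both Theorem \ref{teorema1} iii) and Lemma \ref{lemal} are precisely $g\ge 11$ and gonality at least $5$, which matches the statement of Theorem \ref{teorema3.2}. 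I would write the proof as a single short paragraph that quotes the equality $h^0(C+3K_S-N_1)-1+2l-3\pi^*K_T^2=5b$ from the proof of Lemma \ref{lemal}, uses $K_T^2\le 9$ and $h^0\ge 1$ to bound things, and concludes.

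\begin{proof}
By Theorem \ref{teorema1} iii), under the hypotheses $g\ge 11$ and gonality of $C$ at least $5$, we have
\begin{equation*}
0\le 3K_f^2-19(g-1)+l+1,
\end{equation*}
that is, $19b-1-l\le 3a$ with $a=K_f^2$, $b=g-1$. On the other hand, Lemma \ref{lemal} gives, under the same hypotheses, $l\le \tfrac52 b+14$. Substituting,
\begin{equation*}
\frac{33}{2}b-15=19b-1-\Big(\frac{5}{2}b+14\Big)\le 19b-1-l\le 3a.
\end{equation*}
Dividing by $3$ we obtain $a\ge \tfrac{11}{2}b-5$, i.e.
\begin{equation*}
\Big(5+\frac{1}{2}\Big)(g-1)-5\le K_f^2,
\end{equation*}
as claimed.
\end{proof}
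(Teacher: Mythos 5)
Your proof is correct and is exactly the paper's argument: the paper's own proof consists of the single sentence that the statement is a combination of Theorem \ref{teorema1} iii) and Lemma \ref{lemal}, and your two-line elimination of $l$ (with the arithmetic $19b-(\tfrac52 b+14)-1=\tfrac{33}{2}b-15$ and division by $3$) is precisely that combination, with the hypotheses matching those of both ingredients.
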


\begin{proof} The statement is just a combination of Theorem  \ref{teorema1} and Lemma  \ref{lemal}.
\end{proof}

We return to the analysis of the linear system $|C+3K_S|$. The following is analogous to Lemma  \ref{lema5}.

\begin{prop} \label{prop3.5}  If $b\ge 22$ and the gonality of $C$ is at
least 6, then $\vert C+3K_S -N_1 \vert \ne \emptyset$.
\end{prop}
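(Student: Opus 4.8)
The plan is to mimic the strategy already used to prove Theorem~\ref{teorema1}~ii) and iii), i.e.\ to derive the non-emptiness of $|C+3K_S-N_1|$ by contradiction via Reider's Lemma~\ref{primerlema}, but now applied on the surface $T$ obtained by contracting $(N_1)_{\mathrm{red}}$ rather than on $S$ itself. Let $\pi:S\to T$ be that contraction, so $K_S=\pi^*K_T+N_1$. Then $C+3K_S-N_1=\pi^*(C_0+2K_T)+2N_1$ (using $C=\pi^*C_0$ and absorbing the multiplicities of $N_1$ correctly), and pushing forward, $\pi_*(C+3K_S-N_1)=C_0+2K_T$; so it suffices to show $|C_0+2K_T|\neq\emptyset$ on $T$. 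The point of passing to $T$ is that $C_0$ is still nef there while the adjoint behaves like a genuine bi-adjoint system, so one can hope to iterate the Reider argument.

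First I would record the numerics on $T$. Since $\pi^*K_T^2=a-8b+l$ and, by Lemma~\ref{lemal}, $l\le\frac52 b+14$ under the standing hypotheses $g\ge 11$, gonality $\ge5$, one has control of $K_T^2$; combined with Theorem~\ref{teorema1}~iii) giving $a\ge \frac{19b+(-l-1)}{3}$ one gets $K_T^2$ reasonably large (at least, with $b\ge 22$ the relevant intersection numbers clear the thresholds $5$ and $10$ in Lemma~\ref{primerlema}). Next, assume $|C_0+2K_T|=\emptyset$. Apply Lemma~\ref{primerlema}~i) to the nef divisor $L=C_0+K_T$ (which satisfies $|L+K_T|=|C_0+2K_T|=\emptyset$ and $L^2=(C+K_S-N_1)^2$, computable and $\ge5$): this produces a base-point-free pencil $|E|$ on $T$ with $E\cdot(C_0+K_T)=0$ or $1$. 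Translating back via $\pi$, $E$ pulls back to a pencil meeting $C$ in a small number of points, and the adjunction computation $E\cdot C_0 = 2g_E-2+E\cdot C_0 + \cdots$ forces $g_E=0$ and a small value of $E\cdot C_0$, hence bounds the gonality of $C$ from above — contradicting gonality $\ge 6$. (One must also treat the subcase $L^2<5$; there the bound on $K_T^2$ and on $a$ should make this impossible for $b\ge 22$, or else it directly forces low genus.) This is the structure of the argument; it parallels exactly the proof of Theorem~\ref{teorema1}~iii), only one adjunction step further along.

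The main obstacle I expect is bookkeeping the effect of the contraction: verifying that $\pi^*(C_0+2K_T)+2N_1$ really is $C+3K_S-N_1$ with all the $(-2)$-chain multiplicities matching (this is where the explicit form of $N_1$ from Lemma~\ref{lemaN1} is essential), and then checking that the pencil $|E|$ produced by Reider on $T$, when pulled back and saturated, still meets a general fiber $C$ in a controlled number of points despite possibly passing through the images of the contracted curves. A secondary technical point is pinning down the numerical thresholds: one needs $g\ge 22$ (equivalently $b\ge 22$) precisely so that, after feeding in $l\le\frac52 b+14$ and the lower bound on $a$ from Theorem~\ref{teorema1}, the quantity $(C_0+K_T)^2$ exceeds $5$ and the case analysis on $E\cdot(C_0+K_T)\in\{0,1\}$ only yields pencils incompatible with $C$ having gonality $\ge 6$. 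Once those inequalities are in hand the contradiction is immediate, so the proof is essentially a careful repetition of the Reider bootstrap one level higher, with the contraction $\pi$ doing the work of keeping the adjoint system from being forced negative.
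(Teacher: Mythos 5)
Your overall strategy --- bootstrap the Reider argument one adjoint level higher --- is indeed the paper's strategy, but the paper runs it on $S$ itself with the nef divisor $L=C+2K_S-N_1$, and your transplantation to $T$ contains an off-by-one error in the divisor bookkeeping that derails the proof. Since each connected component of $N_1$ resolves a simple (possibly infinitely near) base point of the pencil on $T$, the correct identities are $\pi^*C_0=C+N_1$ (not $C=\pi^*C_0$) and hence $\pi^*(C_0+2K_T)=C+2K_S-N_1$ and $C+3K_S-N_1=\pi^*(C_0+3K_T)+N_1$. Your claimed identity $C+3K_S-N_1=\pi^*(C_0+2K_T)+2N_1$ is false (the right-hand side equals $C+2K_S+N_1$), and $\pi_*(C+3K_S-N_1)=C_0+3K_T$, not $C_0+2K_T$. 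Consequently the statement you reduce to, $|C_0+2K_T|\neq\emptyset$, pulls back to the non-emptiness of $|C+2K_S-N_1|$ --- which is already known, being the nef part of the effective divisor $C+2K_S$ --- and your Reider application with $L=C_0+K_T$ proves that weaker statement, never reaching $|C+3K_S-N_1|$. The fix is to take $L=C_0+2K_T$ (equivalently, the paper's $L=C+2K_S-N_1$ on $S$) so that $|L+K_T|=|C_0+3K_T|$.

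With that correction there are two further points you wave at but do not do, and they are where the hypotheses actually enter. First, the Reider threshold $L^2\ge 5$ is not automatic: $(C+2K_S-N_1)^2=4a-24b+l$ could a priori be small, and the paper establishes $4a-24b+l\ge\frac13(4b-l-4)\ge\frac12 b-6$ by combining $3a\ge 19b-l-1$ (Theorem \ref{teorema1} iii)) with $l\le\frac52 b+14$ (Lemma \ref{lemal}); this is precisely where $b\ge 22$ is used. Second, the case analysis on the pencil $|E|$ must be carried out: in the paper's version on $S$ one must separately rule out $E\cdot L=0$ (via a second application of Lemma \ref{primerlema} ii) to $C+K_S$) and the possibility $N_1\cdot E=1$, which requires passing to the Hirzebruch surface $\mathbb F_1$ and an adjunction computation forcing $b=17$. (Working on $T$ as you propose would actually simplify this last step, since $N_1$ is already contracted and adjunction on $T$ directly gives $E\cdot C_0\le 5$, contradicting gonality $\ge 6$ --- but only after the divisor identities and the threshold computation are done correctly.)
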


\begin{proof} The hypotheses of Theorem \ref{teorema1}, iii) are satisfied, therefore $C+2K_S-N_1$ is nef.
 Note that, by Lemma \ref{lemal} and part iii) of Theorem  \ref{teorema1}

\begin{align*}
(C+2K_S -N_1)^2 &=4a-24b+l \\ &\ge \frac{4}{3}(19b-l-1)-24b+l \\
&= \frac{1}{3}(4b-l-4)\ge \frac{4}{3}b-\frac{5}{6}b-6=\frac{1}{2}b
-6.
\end{align*}

Thus, if $b\ge 22$ then $(C+2K_S -N_1)^2\ge 5$. By Lemma \ref{primerlema} i) if
$\vert C+3K_S -N_1 \vert= \emptyset$, then there exists a base
point free pencil $\vert E \vert$ such that:

$$E \cdot (C+2K_S-N_1)= 1 \text{ or } 0.$$

We have either that $E$ is contracted by $\vert C+2K_S -N_1
\vert$ or $E$ is a rational curve with $E^2=0$ and
$E \cdot (C+2K_S-N_1)= 1 $.
\\

In the first case $E$ is contracted as well by $C+2K_S$, but
$C+K_S$ is nef and $(C+K_S)^2=a -4b\ge 10$. Thus, by part ii) of
Lemma \ref{primerlema}, there exists a pencil $\vert E'\vert$, such that:

\begin{equation*}
(C+K_S) \cdot E'= 2 \text{ or } 3.
\end{equation*}

\noindent But then:

\begin{equation*}
C \cdot E'-2\le 2 \text{ or } 3,
\end{equation*}

\noindent which gives a contradiction with
the assumption on the gonality of $C$.
\\

On the other hand, if $E$ is rational then $E \cdot K_S=-2$ and:

\begin{equation*}
E \cdot C-4-N_1 \cdot E= 1.
\end{equation*}

Consider the $\Bbb P^1-$ fibre bundle associated with $E$ (see
\cite{barth}, V 4.3):

\begin{align*}
\xymatrix { S \ar[r]^{\phi} & R  \ar[r] & \Bbb P^1. \\}
\end{align*}

$R$ is a Hirzebruch surface $\Bbb F_n$ and, if $n\ne 1$ then $\Bbb
F_n$ is minimal and $N_1$ must be contracted by $\phi$. We
conclude that $N_1$ is a sum of vertical curves with respect to
$\vert E \vert$, and $N_1 \cdot E=0$.
\\

If $R=\mathbb{F}_1$ then we could have $N_1 \cdot E=1$, but in this case $C \cdot E=6$ and the image of $C$
in $\mathbb{F}_1$ becomes equivalent to $\Gamma_0 + 6E$, with $\Gamma_0$ denoting the (-1)-section,
that is, the image of $N_1$ under $\phi$. A simple calculus using the adjunction formula on $\mathbb{F}_1$
shows that $b=17$. Therefore, the only possibility is $N_1 \cdot E=0$ and $E \cdot C \leq 5$.
This final contradiction proves the Proposition. \end{proof}

Next we need to obtain the negative part of $C+3K_S-N_1$, the
computation is quite analogous to that of the negative part of
$C+2K_S$, using in this case that $C+2K_S-N_1$ is nef. This
negative part is $N_1+N_1'+N_2$ with

\begin{equation*}
N_1'=\sum_{j=1}^{\l'(\Gamma_i)}(l'(\Gamma_i)-j+1)E'_{ij},
\end{equation*}

\noindent where $E_i'=\sum_i E'_{ij}$ are maximal $(-2)$ divisors
such that $E'_{i} \cdot C=1$ and $E'_{i} \cdot \Gamma_i=1$, of
length $\l'(\Gamma_i)$; and

$$N_2=\sum_{i=1}^{t} [(m(\Delta_i)+1)\Delta_i +
\sum_{j=1}^{m(\Delta_i)}(m(\Delta_i)-j+1)F_{ij}],$$

\noindent with $\{\Delta_1,..., \Delta_t \}$ the set of $(-1)$
curves on $S$ satisfying $\Delta_i \cdot C=2$, and $F_i=\sum
F_{ij}$ are vertical maximal $(-2)-$divisors such that $F_{i}
\cdot \Delta_i=1$ and of length $m(\Delta_i)$. To each
$(-1)-$section $\Gamma_i$ there is associated a unique (possible
empty) divisor $E_i'$ and the same is true for divisors $\Delta_i$
with respect to the chains $F_i$.
\\

Denote $l'=\sum_{i=1}^{s'} (l'(\Gamma_i)+1)$ and $m=\sum_{i=1}^t
(m(\Delta_i)+1)$.
\\

As in the case of $N_1^2$, a similar calculus shows that
${N_1'}^2=-l'$ and $N_2^2=-m$. We have obtained, in analogy with
Theorem \ref{teorema1}:

\begin{teo} \label{teorema2} Let $f: S \to \Bbb P^1$ be a relatively
minimal fibration on a rational surface $S$, with general fiber
$C$ of genus $g$. Then the following statements hold:

i) If $C+3K_S-N_1$ is effective, then $C+3K_S-2N_1-N_1'-N_2$ is nef, and

\begin{equation*}
0 \leq 9K_f^2-60(g-1)+4l+l'+m.
\end{equation*}

ii) If the gonality of $C$ is at least $6$ and $g \geq 23$, then  $C+3K_S-N_1$ is effective.

\end{teo}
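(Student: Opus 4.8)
The proof will run parallel to that of Theorem~\ref{teorema1}, with Proposition~\ref{prop3.5} and Lemma~\ref{lemaN1} playing for $|C+3K_S-N_1|$ the roles that Lemma~\ref{lema5} and the Zariski--Fujita computation of the negative part of $C+2K_S$ play there. Part~(ii) is in fact nothing but a restatement of Proposition~\ref{prop3.5}: the hypothesis ``$g\ge23$'' is ``$b\ge22$'', and ``gonality of $C$ at least $6$'' is unchanged, so Proposition~\ref{prop3.5} yields $|C+3K_S-N_1|\ne\emptyset$. Note that these hypotheses also meet those of Theorem~\ref{teorema1}~(iii), hence $C+2K_S-N_1$ is nef and big; the nefness of $C+2K_S-N_1$ is the input that the Zariski--Fujita computation in part~(i) uses, so in practice part~(i) is applied precisely when $C+2K_S-N_1$ is known to be nef.

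For part~(i), I would write $C+3K_S-N_1=(C+2K_S-N_1)+K_S$ with the first summand nef and run the Zariski--Fujita algorithm from this effective divisor, imitating the proof of Lemma~\ref{lemaN1} step by step. If $D$ is an irreducible curve with $(C+3K_S-N_1)\cdot D<0$, then $D^2<0$ and, since $C+2K_S-N_1$ is nef, $K_S\cdot D<0$, so $D$ is a $(-1)$-curve, which by relative minimality is horizontal; checking the possibilities for $C\cdot D$ and $N_1\cdot D$ isolates two families: the $(-1)$-sections $\Gamma_i$ (which re-enter because $N_1\cdot\Gamma_i=-1$, so $(C+3K_S-N_1)\cdot\Gamma_i=-1$), and the $(-1)$-curves $\Delta_i$ with $\Delta_i\cdot C=2$ and $\Delta_i\cdot N_1=0$, for which $(C+3K_S-N_1)\cdot\Delta_i=-1$. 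Subtracting these, contracting the curves found so far, and invoking on the contracted surface $T$ the implication ``$(C_0+K_T)\cdot\pi_*D<0\Rightarrow\pi_*D$ is a $(-1)$-curve'' exactly as in Lemma~\ref{lemaN1}, one iterates and picks up, with the multiplicities forced by the Zariski--Fujita linear system at each stage, the maximal $(-2)$-chains $E'_{ij}$ (the divisors $E'_i$ with $E'_i\cdot C=E'_i\cdot\Gamma_i=1$) and the chains $F_{ij}$ attached to the $\Delta_i$. The algorithm terminates with negative part $N_1+N_1'+N_2$, so $P:=C+3K_S-2N_1-N_1'-N_2$ is nef.

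The inequality then follows as in Theorem~\ref{teorema1}~(i): it is simply the assertion $P^2\ge0$. Expanding $P^2$ using the standard identities $C^2=0$, $C\cdot K_S=2b$, $K_S^2=a-8b$, the self-intersections $N_1^2=-l$, $(N_1')^2=-l'$, $N_2^2=-m$, the facts that $\Gamma_i,\Delta_i$ are $(-1)$-curves with $\Gamma_i\cdot C=1$, $\Delta_i\cdot C=2$ while $E_{ij},E'_{ij},F_{ij}$ are $(-2)$-curves, together with the Zariski--Fujita orthogonalities $P\cdot N_1=P\cdot N_1'=P\cdot N_2=0$ (which reduce $P^2$ to $P\cdot(C+3K_S-N_1)=(C+3K_S-N_1)^2-(N_1+N_1'+N_2)\cdot(C+3K_S-N_1)$), one obtains $P^2=9K_f^2-60(g-1)+4l+l'+m$, which is the claim.

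I expect the real obstacle to be the inductive bookkeeping inside the Zariski--Fujita algorithm: one must verify that it halts exactly at $N_1+N_1'+N_2$ with the stated multiplicities, that no further curves enter the negative part (for instance $(-1)$-curves of $C$-degree $\ge3$ meeting $N_1$, or additional $(-2)$-configurations), and one must track carefully the intersection behaviour of the chains $E'_{ij}$, which---unlike the chains inside $N_1$---need not be vertical and interact nontrivially with $\Gamma_i$ and $C$. Once the configuration is pinned down, the concluding numerical computation is routine.
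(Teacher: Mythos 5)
Your proposal follows the paper's own route exactly: part (ii) is read off from Proposition~\ref{prop3.5}, and part (i) is obtained by running the Zariski--Fujita algorithm on $C+3K_S-N_1$ (using the nefness of $C+2K_S-N_1$, in the same way that Lemma~\ref{lemaN1} is proved) to identify the negative part as $N_1+N_1'+N_2$, after which the stated inequality is just $(C+3K_S-2N_1-N_1'-N_2)^2\ge 0$. The bookkeeping issues you flag at the end (ruling out extra $(-1)$-curves entering the negative part, and tracking the non-vertical chains $E'_{ij}$) are real but are passed over in the paper as well, which simply declares the computation ``quite analogous'' to that of Lemma~\ref{lemaN1}; your write-up is, if anything, slightly more explicit than the original.
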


The proof of i) follows, as in the analogous case in Theorem
3.2, from the fact that $(C+3K_S-2N_1-N_1'-N_2)^2\ge 0$, since the
divisor is nef. Part ii) follows from Proposition \ref{prop3.5}.
\\

In particular, under the hypothesis of Theorem \ref{teorema2} i), if
$4l + l'+m\le 6(g-1)$, then $ 6(g-1) \le K_f^2$.

\section{Examples}

In the following examples we start with a surface $T$ and a pencil
$W$ of nodal curves on $T$. The fibration will be obtained in a
surface $S$ by blowing-up the base locus of the pencil. The
general fiber of the fibration will be denoted by $C$ and will
correspond with the proper transform of the general element
$C_0\in W$.
\\

In general it is not true that blowing up the base locus of a
pencil on a minimal surface $T$ gives rise to a relatively minimal
fibration. The simplest example comes from considering the pencil
generated by an  irreducible conic $Q$ in $\Bbb P^2$ and the
product of two lines $L_1$, $L_2$ that intersects in a point not
contained in $Q$. After blowing up the base locus of this pencil
the proper transform of both, $L_1$ and $L_2$ become vertical
$(-1)-$curves. More complicated examples can be constructed.
Fortunately if we limit the singularities of the general element
of the pencil we can gain control of the situation.
\\

Thus, we start by computing the conditions to have in the
following examples relatively minimal fibrations. Consider a curve
$C_0=D_0+D_1$ in the pencil $W$  and letting $p_1,...,p_l$ be the
nonsingular points in the base locus of $W$ and letting
$q_1,...,q_m$ be the nodal points in the base locus of $W$, we
will use the following notation:

\begin{align*}
l_{01}&=\#\{p_k: p_k \in D_0\}\\
l_{02}&=\#\{q_k: \textrm{$q_k$ is simple for $D_0$} \}\\
m_{0}&=\#\{q_k: \textrm{$q_k$ is node for $D_0$}\}\\
l_0&=l_{01}+l_{02}.
\end{align*}

\noindent Suppose that the proper transform  $\widetilde D_0
\subset S$ is a vertical (-1)-curve.  We have the following
equations:
\\

\noindent (i)   $\widetilde D^2_0=-1=D^2_0-\ell_{01}-\ell_{02}-4m_0,$

\noindent (ii) $g_{\widetilde D_0}=0$, which implies that
$\frac{D_0\cdot (D_0+K_T)}{2}-m_0=-1,$

\noindent (iii) $\widetilde D_0\cdot \widetilde C_0=0$, that is,
$D_0\cdot C_0-\ell_{01}-2\ell_{02}-4m_0=0$.

\vskip2mm

\noindent  From (i) and (ii) we have that
$D^2_0-\ell_{01}-\ell_{02}+1=2D^2_0+2D_0\cdot K_T+4$, and
$D^2_0+2D_0\cdot K_T+3=-(\ell_{01}+\ell_{02})$, that is,  if
$D_0^2+2D_0\cdot K_T+3>0$  curve $D_0$ cannot exist. On the other
hand
 we have  from (i) and (iii) that $D_0\cdot
C_0-\ell_{01}-2\ell_{02}=D_0^2-\ell_{01}-\ell_{02}+1$, then
$D_0\cdot C_0-D^2_0=\ell_{02}+1>0$. This implies that if there
exists curve $D_0$ the following two conditions hold
simultaneously:

$$D^2_0+2D_0\cdot K_T<0,\hskip4mm D_0\cdot C_0-D^2_0>0.$$

For the case $T=\Bbb P^2$ we have that:

\begin{align*}
\widetilde{D_0}^2&=d_0^2-l_0-4m_0=-1\\
g_{\widetilde{D}_0}-1&=\frac{d_0(d_0-3)}{2}-m_0=-1\\
C_0 \cdot \widetilde{D}_0&=d_0d-l_{01}-2l_{02}-4m_0=0.
\end{align*}

\noindent where $d_0$ is the degree of $D_0$. From the above
equations we can conclude that $d_0(d_0-6)+3=-l_0$ and
$d_0(2d_0-d) > 0$. If $d_0 \geq 6$ then we have a contradiction
with the first equation and if $d_0 < 6$ then  $d < 12$. In
conclusion a nodal pencil of degree $d \geq 12$ in $\Bbb P^2$
gives place to a relatively minimal fibration.
\\

Similar computations show that if  $T=\mathbb{F}_0$ and the class of $C_0$ is $(\alpha, \beta)$ with $\alpha, \beta \geq 8$ then the associated fibration is relatively minimal.

\vspace{.3cm}

\begin{enumerate}

\item We want to use Theorem \ref{teorema2} i) in order to construct a pencil of
plane curves  such that the associated fibration satisfies
$6(g-1)\le K_f^2$.
\\

Consider  positive integers $l,m,d$ satisfying the conditions $l+4m=d^2$ and $l=2m$, i.e., $6m=d^2$. Fix $m$ points $q_1,...,q_m \in \Bbb P^2$, if $V(q_1,...,q_m)$ denotes the
linear system of plane curves of degree $d$ having nodes at $q_i$, then:

\begin{equation*}
\dim V(q_1,...,q_m) \geq  \frac{d(d+3)}{2}-3m= \frac{l-2m+3d}{2}=\frac{3d}{2},
\end{equation*}

\noindent which is positive. Taking a general pencil contained in $V(q_1,...,q_k)$, and blowing
up its base locus, we will obtain a rational fibered surface $S$.
\\

For $d > 9$ the condition $|C+3K_S| \neq \emptyset$ is satisfied
because:

\begin{align*}
K_S&=[-3,1,1,1,...,1] \\
C&=[d, -1,...,-1,-2,...,-2].
\end{align*}

\noindent The above equalities mean equality of classes in the
Picard group of $S$, where we use the standard ordered basis for
$Pic (S)\simeq Pic (\Bbb P^2) \oplus_i \Gamma_i\Bbb Z \oplus_j
\Delta_j \Bbb Z$ with $\Gamma_i$ the exceptional divisors
associated with $p_i$ and $\Delta_j$ the exceptional divisors
associated with $q_j$, and denote by $H$ the hyperplane class in
$\Bbb P^2$. Therefore

\begin{equation*}
C+3K_S=[d-9,2,...,2,1,...,1].
\end{equation*}

\noindent Note that the decomposition of the divisor
$C+3K_S=(d-9)H+(\sum 2\Gamma_i + \sum \Delta_i)$ is the
Zariski-Fujita decomposition of $C+3K_S$. In this case, being
$\Bbb P^2$ a minimal surface the divisor $N_1$ defined in the
previous section is just $\sum \Gamma_i$ and $N_2= \sum \Delta_i$.
The numbers $l$ and $m$ coincide with the previously defined in
section 3. By the genus formula $6b=3d^2-9d-6m$, in order to be in
the hypothesis of the remark after Theorem \ref{teorema2} we need
the inequality $4l+m \leq 6b$ which is equivalent to  $3d \leq m$.
If we take $d = 18, m=54$ we obtain a curve of genus $82$ , so
with this numerical conditions we have a fibration satisfying $6b
\leq a$. Moreover the gonality of $C$ is $d-2= 16$ (see
\cite{kato}).

\vspace{.5cm}

\item Consider the Hirzebruch surface $T=\mathbb{F}_0$ and  let $C_0$ be an effective divisor on $T$ of class $(\alpha,\beta)$.
The arithmetic genus of $C_0$ is $g_a=1 + \frac{C^2_0+ C_0\cdot
K_T}{2}$, i.e., $g_a-1=\alpha \beta -\alpha - \beta$. Suppose also
that $C_0$ has $m$ nodes. Then its geometric genus $g$ is $\alpha
\beta -\alpha - \beta + 1 -m$. The classes of $K_S$ and $C$ in
$Pic(S)$ are given by:

\begin{align*}
K_S&=[(-2,-2),1,1,...,1] \\
C&=[(\alpha, \beta),-1,...,-1,-2,...,-2].
\end{align*}

Here we use again the standard ordered basis for $Pic (S)\simeq
Pic (\Bbb F_0) \oplus_i \Gamma_i\Bbb Z \oplus_j \Delta_j \Bbb Z$
with $\Gamma_i$ the exceptional divisors associated with $p_i$ and
$\Delta_j$ the exceptional divisors associated with $q_j$. The
divisors $N_1$ and $N_2$, as defined in the previous section are
given again in this example by $N_1=\sum \Gamma_i$ and $N_2= \sum
\Delta_i$.

\noindent Since $h^0(T, \mathcal{O}_T(C_0))-1 \geq \frac{C_0^2-C_0
\cdot K_T}{2}$, we have that $\dim |C_0| > 0$ if and only if
$\alpha \beta+ \alpha + \beta > 3m$. In order to apply Theorem
\ref{teorema2} we need  $|C+3K_S-N_1| \neq \emptyset$, for this,
it  is enough to have $\alpha \geq 7$ and $\beta \geq 7$, because
$3K_S+C=[(\alpha- 6, \beta - 6), -2,-2,-1,...,-1]$. In order to
guarantee that the resulting fibration is relatively minimal we
must take both $\alpha, \beta \geq 8$.
\\

We also need to have $4l+m \leq 6b$. Since $2 \alpha \beta = C_0^2=l +4m$,  then $8 \alpha \beta -15 m \leq 6 \alpha \beta - 6 \alpha - 6 \beta - 6m $,
therefore  $3m \geq \frac{2}{3} \alpha \beta + 2 \alpha + 2 \beta$.  Then the condition $3m \in [\frac{2}{3} \alpha \beta + 2 \alpha + 2 \beta, \alpha \beta+ \alpha + \beta]$
is satisfied taking for example $\alpha = \beta=8$ and $m=26$. Then if we blow-up the nodes in the pencil given by $C_0$ we obtain the following diagram:

\begin{align*}
\xymatrix {
           S \ar[dr]_f \ar[r] & T \ar[d] \\
                                      &  \mathbb{P}^1,}
                                      \end{align*}

\noindent where $f$ is a fibration of genus $23$ that satisfies $6b \leq a$.

\vspace{.5cm}

\item In this example we exhibit a fibration for which the equality  $K_f^2=6(g-1)$ holds.
Following the notation of the above example consider the numbers
$\alpha, \beta, l$ and $m$ satisfying  $3m < \alpha \beta+ \alpha
+ \beta$, $l+4m=2 \alpha \beta$, since
$a=8(\alpha-1)(\beta-1)-l-9m$ and $b=\alpha \beta - \alpha
-\beta-m$, the condition $a=6b$ is equivalent to having $2\alpha +
2\beta = m+8$. For example the numbers $\alpha=\beta=8$, $m=24$
satisfy the above conditions. In this case we obtain a fibration
of genus $25$. We must observe that the minimal degree of a map
$C_0 \to \Bbb P^1$ is $\alpha$ but we can not guarantee that
$\alpha$ is the gonality of its normalization $C$, it could be in
fact lower (see \cite{martens}).

\end{enumerate}

\section{Slope of fibration with $11 \leq g \leq 49$}

Let as before $f:S \to \Bbb P^1$ be a relatively minimal fibration on a rational surface. Through this section assume morever that $g\ge 11$ and the
gonality of $C$ is at least $5$.
\\

Using Lemma \ref{lema2.2} and Lemma  \ref{lema5}
we have that $K_f$ is big and nef.
Thus:

\begin{align*}
h^0(nK_f) & = \frac{nK_f((n-1)K_f+2C)}{2}+1 \\
& =\frac{n(n-1)a +2bn}{2}+1.
\end{align*}

\noindent On the other hand, consider the direct image sheaves:

\begin{equation*}
 f_* nK_f= \bigoplus_{i=1}^{b(2n-1)} \mathcal{O}(a_i).
 \end{equation*}

\noindent Note that by Mumford's vanishing
$h^1(nK_f-(n+1)C)=h^1((n-1)(C+K_S)+K_S)=0$ for $n\ge 2$, as
$C+K_S$ is nef. Thus, by the projection formula

\begin{align*}
a_i-(n+1)&\ge -1,\\
a_i&\ge n.
\end{align*}

In this way $ 2(n+1)(2n-1)b\le 2h^0(f_* nK_f)=2h^0(nK_f)= n(n-1)a
+ 4bn+2$. Substraction of both terms leads to the conclusion that
$q(x)= (a-4b) x^2 -(a-2b)x +2b+2$ is positive when evaluated in
any integer $n\ge 2$.
\\

Some properties of this polynomial are summarized in:

\begin{prop}  \label{prop5.2} Let $f:S \to \Bbb P^1$ be a semistable,
non isotrivial fibration on a rational surface $S$. Then:
\\

i) $q$ is positive when evaluated in any integer $n$.
\\

ii) If $q$ has real roots then they are located in $(0,1)\cup
(1,2)$.
\\

iii) The discriminant of $q$ is $\Delta_q= (a-6b)^2-8(a-4b)$.
\\

iv) If $\Delta_q \leq 0$ then:

$$ 6(g-1)+4-4\sqrt {g} \le K_f^2.$$
\end{prop}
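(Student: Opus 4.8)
The plan is to derive all four parts by elementary manipulations of the quadratic $q(x)=(a-4b)x^2-(a-2b)x+2b+2$, using only the positivity $q(n)>0$ for integers $n\ge 2$ established just above and the bound $a\ge 5b$, which is available here because $b\ge 10$ and the gonality of $C$ is at least $5$ (Lemma \ref{lema5}). The first thing I would record is the immediate consequence: $a-4b\ge b>0$ and $a-2b\ge 3b>0$, so the leading coefficient of $q$ is positive and $q$ opens upward. For part i) it then suffices to check the remaining integers $n\le 1$: one computes $q(1)=(a-4b)-(a-2b)+(2b+2)=2>0$ and $q(0)=2b+2>0$, while for an integer $n\le -1$ each of the three summands $(a-4b)n^2$, $(a-2b)(-n)$ and $2b+2$ is strictly positive, so $q(n)>0$. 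Combined with the case $n\ge 2$ this gives i).

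For part ii), since $q$ opens upward, real roots $r_1\le r_2$ would make $q$ negative exactly on the open interval $(r_1,r_2)$ and nonnegative outside it; by i) no integer lies in $(r_1,r_2)$ and neither root is an integer, so both roots belong to a single interval $(k,k+1)$. I would then invoke the Vi\`ete relations $r_1 r_2=\frac{2b+2}{a-4b}>0$ and $r_1+r_2=\frac{a-2b}{a-4b}=1+\frac{2b}{a-4b}$, where the bound $a-4b\ge b$ yields $1<r_1+r_2\le 3$. Positivity of the product forces $k\ge 0$, while $k\ge 2$ would force $r_1+r_2>2k\ge 4$, a contradiction; hence $k\in\{0,1\}$, i.e. the roots lie in $(0,1)\cup(1,2)$.

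Part iii) is a direct expansion: writing $q(x)=Ax^2+Bx+C$ with $A=a-4b$, $B=-(a-2b)$, $C=2b+2$,
\[
\Delta_q=B^2-4AC=(a-2b)^2-4(a-4b)(2b+2)=a^2-12ab+36b^2-8a+32b=(a-6b)^2-8(a-4b).
\]
For part iv) I would set $u=a-6b$; by iii) the hypothesis $\Delta_q\le 0$ reads $u^2\le 8(a-4b)=8u+16b$, i.e. $u^2-8u-16b\le 0$. The roots of $u^2-8u-16b$ are $4\pm 4\sqrt{b+1}=4\pm 4\sqrt g$, so this inequality forces $u\ge 4-4\sqrt g$, that is $K_f^2=a\ge 6b+4-4\sqrt g=6(g-1)+4-4\sqrt g$, as claimed.

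No part presents a real obstacle; the content is the algebraic identity of iii) and the quadratic-inequality manipulation of iv). The only step demanding a little care is the root-location bookkeeping in ii), where excluding the intervals $(k,k+1)$ with $k\ge 2$ relies on the upper bound $r_1+r_2\le 3$, and hence on the input $a\ge 5b$ from Lemma \ref{lema5}.
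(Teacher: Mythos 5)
Your proof is correct and follows essentially the same elementary route as the paper: checking $q(0)=2b+2$, $q(1)=2$, using $a\ge 5b$ from Lemma \ref{lema5} to locate the roots in $(0,2)$ (the paper via the vertex position, you via Vi\`ete's relations), expanding the discriminant, and solving the resulting quadratic inequality in $a$ (the paper directly, you after the harmless substitution $u=a-6b$). No gaps.
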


\begin{proof} i) By construction $q(n)>0$ for $n\ge 2$. It is easy
to verify that $q(1)=2$ and $q(0)=2b+2$. Moreover, the critical
value of $q$ is $\frac{2(a-4b)}{a-2b}$ which turns out to satisfy

$$ 0< \frac{2(a-4b)}{a-2b}< 2.$$

\noindent This proves i) and ii). Part iii) is just a direct computation.
\\

For iv), consider the discriminant $\Delta_q$ as a polynomial in
$a$:

$$ \Delta_q(a)= a^2 -(12b+8)a + 36b^2 +32 b.$$

\noindent If $\Delta_q(a)\le 0$ then $a$ is in the ``negative region" of this
quadratic function. Thus,

$$\frac{12b+8 -\sqrt{(12b+8)^2-4(36b^2+32b)}}{2}\le a.$$

This proves the Proposition.
\end{proof}

The next Theorem gives an inequality for the slope of fibrations of
genus $11\le g \le 49$, with an extra assumption on the surface
$T$ obtained after blowing down the negative part of $C+2K_S$.

\begin{teo} If $f:S \to  \Bbb P^1$ is a relatively
minimal fibration of genus $11\le g\le 49$ on a rational surface $S$ such that the gonality
of the general fiber $C$ is at least $5$ and the surface $T$
obtained by blowing-down the divisor $N_1$ satisfies that
$K_T^2 < 0$, then:

$$ 6(g-1) +4 -4\sqrt g \le K_f^2.$$
\end{teo}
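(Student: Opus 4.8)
The plan is to combine Proposition \ref{prop5.2} iv) with Lemma \ref{lemal} (in a sharpened form made possible by the hypothesis $K_T^2<0$), so that the only thing left to check is the sign of the discriminant $\Delta_q$. By Proposition \ref{prop5.2} iv), the desired inequality $6(g-1)+4-4\sqrt g\le K_f^2$ follows as soon as we know $\Delta_q\le 0$, where $\Delta_q=(a-6b)^2-8(a-4b)$. So the whole argument reduces to proving $\Delta_q\le 0$, i.e.\ that $a$ lies between the two roots of $\Delta_q(a)=a^2-(12b+8)a+36b^2+32b$; equivalently
\begin{equation*}
6b+4-\sqrt{8b+16}\ \le\ a\ \le\ 6b+4+\sqrt{8b+16}.
\end{equation*}
The lower bound is exactly what we are trying to prove (and it is what Proposition \ref{prop5.2} iv) extracts from $\Delta_q\le 0$), so the real content is to establish the \emph{upper} bound $a\le 6b+4+\sqrt{8b+16}$ under the hypotheses $11\le g\le 49$ and $K_T^2<0$, together with the fact that $a$ actually lies in the relevant interval rather than above it.

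First I would redo the computation in Lemma \ref{lemal} keeping $K_T^2$ as a variable. From $K_S=\pi^*K_T+N_1$, $K_S^2=a-8b$, and $N_1^2=-l$ we get $\pi^*K_T^2=K_T^2=a-8b+l$. Substituting the identity from the proof of Lemma \ref{lemal}, namely $h^0(C+3K_S-N_1)-1+2l-3K_T^2=5b$, i.e.\ using part iii) of Theorem \ref{teorema1} in the form $0\le 3a-19b+l+1$ together with $l=K_T^2+8b-a$, yields a bound of the shape $a\le \alpha b+\beta+\gamma K_T^2$ for explicit constants; since $K_T^2<0$ (and $K_T^2$ is an integer, so $K_T^2\le -1$) this gives a genuinely better bound on $a$ than the generic $K_T^2\le 9$ bound used in Theorem \ref{teorema3.2}. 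The key point is that with $K_T^2<0$ the upper estimate for $a$ becomes roughly $a\lesssim \tfrac{27}{4}b$-type but the honest linear-in-$b$ bound, combined with the range restriction $b\le 48$, forces $a\le 6b+4+\sqrt{8b+16}$; here is exactly where the numerical window $11\le g\le 49$ enters, because the bound $a\le(\text{const})b+(\text{const})$ only beats $6b+4+\sqrt{8b+16}$ for $b$ not too large.

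The main obstacle I anticipate is controlling $l$ (equivalently $K_T^2$) precisely enough: Theorem \ref{teorema1} iii) only gives the \emph{one-sided} inequality $l\ge 19b-3a-1$, and to bound $a$ from above I need an inequality going the other way, which is where $\pi^*K_T^2=a-8b+l$ together with $K_T^2<0$ supplies the missing direction, giving $l<8b-a$, hence $a<8b-l\le 8b-(19b-3a-1)$... — one must be careful that this does not collapse to a triviality, so the argument has to be arranged so that the hypothesis $K_T^2<0$ is used to pin $a$ into the band around $6b$ rather than merely bound it from one side. Concretely I would: (1) from $K_T^2\le -1$ deduce $l\le 8b-a-1$; (2) feed this into the inequalities $0\le 3a-19b+l+1$ (Theorem \ref{teorema1} iii)) and $0\le 4a-24b+l$ (Theorem \ref{teorema1} i)) to trap $a$ in an explicit interval; (3) check, using $b\le 48$, that this interval is contained in $[\,6b+4-\sqrt{8b+16},\,6b+4+\sqrt{8b+16}\,]$, so that $\Delta_q\le 0$; (4) invoke Proposition \ref{prop5.2} iv) to conclude. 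The verification in step (3) is a finite check over $11\le g\le 49$ and is the only place where the specific numerical bounds are essential.
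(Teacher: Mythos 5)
Your core mechanism is exactly the paper's: from $\pi^*K_T^2=a-8b+l$ and $K_T^2\le -1$ you get $l\le 8b-a-1$, and feeding this into the inequality $0\le 3a-19b+l+1$ of Theorem \ref{teorema1} iii) gives $0\le 2a-11b$, i.e.\ $a\ge \tfrac{11}{2}b$; this, the restriction $b\le 48$, and Proposition \ref{prop5.2} iv) are precisely the ingredients of the paper's proof. However, there are two problems. First, a genuine gap in your step (2)--(3): the inequalities you propose to combine ($0\le 3a-19b+l+1$, $0\le 4a-24b+l$, $l\le 8b-a-1$) all have $a$ entering with a positive coefficient once $l$ is eliminated, so they produce only \emph{lower} bounds on $a$; the best upper bound they yield is $a\le 8b-1$ (from $l\ge 0$), and $8b-1$ exceeds the upper root $6b+4+4\sqrt{g}$ as soon as $b$ is roughly $9$ or more, so your finite check in step (3) fails on most of the range $11\le g\le 49$. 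The missing observation --- which the paper states as its first sentence --- is that one may assume $a\le 6b$ outright, since if $a>6b$ the conclusion $a\ge 6(g-1)+4-4\sqrt g$ is immediate. With that reduction, $a\in[\tfrac{11}{2}b,6b]$, and $(a-6b)^2-8(a-4b)\le \tfrac{1}{4}b^2-12b=\tfrac{b(b-48)}{4}\le 0$ for $b\le 48$, so Proposition \ref{prop5.2} iv) applies.

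Second, a computational slip: the discriminant of $\Delta_q(a)=a^2-(12b+8)a+36b^2+32b$ is $64b+64$, so the roots are $6b+4\pm 4\sqrt{b+1}=6b+4\pm 4\sqrt g$, not $6b+4\pm\sqrt{8b+16}$; this is why the lower root reproduces exactly the bound in the statement. Note also that once you have $a\ge\tfrac{11}{2}b$, the elementary inequality $\tfrac{11}{2}b\ge 6b+4-4\sqrt{b+1}$ (equivalent to $b\le 48$) already gives the theorem directly, so the detour through $\Delta_q$ is a packaging device rather than a logical necessity; but if you do route the argument through $\Delta_q\le 0$, the reduction to $a\le 6b$ cannot be omitted.
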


\begin{proof} We can assume $a\leq 6b$. Following the proof of Lemma \ref{lemal}  and by Theorem  \ref{teorema1}  (iii) we have $2\ell\leq 5b+1+3\pi^*(K^2_T)$, the condition $K^2_T<0$ implies
$\ell+1\leq \frac{5}{2}b$, since $3a\geq 19b-\ell-1$ we obtain $\frac{11}{2}b\leq a$. Hence we have:

$$\Delta_q= (a-6b)^2 -8(a-4b) \le
(1/2)^2b^2-12b=\frac{b(b-48)}{4}.$$

Combining with part iv) of Proposition \ref{prop5.2} we get the theorem.
\end{proof}

\noindent \textbf{Claudia R. Alc\'antara:}
\noindent Departamento de Matem\'aticas, Universidad de Guanajuato;
\noindent Jalisco s/n,  Valenciana,  C.P. 36240, Guanajuato, Guanajuato, M\'exico.
\noindent claudia@cimat.mx

\noindent \textbf{Abel Castorena:}
\noindent Instituto de Matem\'aticas, Unidad Morelia, Universidad Nacional Aut\'onoma de M\'exico;
\noindent Apartado Postal 61-3 (Xangari), C.P. 58089, Morelia, Michoac\'an, M\'exico.
\noindent abel@matmor.unam.mx

\noindent \textbf{Alexis G. Zamora:}
\noindent Unidad Acad\'emica de Matem\'aticas, Universidad Aut\'onoma de Zacatecas;
\noindent Camino a la Bufa y Calzada Solidaridad C.P. 98060, Zacatecas, Zac., M\'exico.
\noindent alexiszamora06@gmail.com


\begin{thebibliography}{99}


\bibitem{arakelov} Arakelov S.J.:
\emph{Families of algebraic curves with fixed degeneracies}. Math. USSR-Izv. 5, 1971.


\bibitem{badescu} Badescu L.:
\emph{Algebraic Surfaces}. Springer-Verlag, 2001.


\bibitem{barth} Barth W., Peters C., Van de Ven A.:
\emph{Compact Complex Surfaces}. Springer Verlag, 1984.






\bibitem{kato} Coppens M., Kato T.:
\emph{The gonality of smooth curves with plane models.}  Manuscripta math. 70, 5-25, 1990.


\bibitem{ciliberto} Calabri A., Ciliberto C.:
\emph{Birational classification of curves on rational surfaces.}  Arxiv 0906.49603.



\bibitem{konno} Kitagawa S., Konno K.:
\emph{Fibred rational surfaces with extremal Mordell-Weill latices.} Mathematische Zeitschrift, 251, 179-204, 2005.





\bibitem{konno3} Konno K.:
\emph{Clifford index and the slope of fibered surfaces.} J. Alg. Geom. 8, 207-220, 1999.


\bibitem{martens} Martens G.:
\emph{The gonality of curves on a Hirzebruch surface.} Arch. Math., Vol 67, 349-352, 1996.


\bibitem{parshin}  Parshin A.:
\emph{Algebraic curves over functions
fields}. Izvest. Akad. Nauk., 32, 1968.

\bibitem{reider} Reider I.:
\emph{Vector Bundles of rank $2$ and linear
systems on algebraic surfaces.} Ann. Math. 127, 309-316, 1988.

 \bibitem{tan} Tan S-L:
\emph{The minimal number of singular fibers of a semi stable curve over $\Bbb P^1$.}  J. Algebraic Geom., 4, 591--596, 1995.


\bibitem{tan-tu-zamora} Tan S-L , Tu Y. ,  Zamora A. G. :
\emph{On complex surfaces with $5$ or $6$ semistable singular fibers over
$\mathbb{P}^1$ }. Math. Zeitschrift, No. 249,  427-438, 2005.

\bibitem{vojta} Vojta P.:
\emph{Diophantine Inequalities and Arakelov Theory }. Appendix to introduction to Arakelov theory by S. Lan , Springer-Verlag, 155-178, 1988.




\end{thebibliography}
\end{document}